\numberwithin{equation}{section}
\DeclareMathOperator{\E}{\mathbb{E}}
\DeclareMathOperator{\Var}{Var}
\DeclareMathOperator{\codim}{codim}
\DeclareMathOperator{\dist}{dist}
\DeclareMathOperator{\tr}{tr}
\renewcommand{\Pr}[2][]{\mathbb{P}_{#1} \left\{ #2 \rule{0mm}{3mm}\right\}}
\newcommand{\ip}[2]{\langle#1,#2\rangle}
\def \N {\mathbb{N}}
\def \P {\mathbb{P}}
\def \R {\mathbb{R}}
\def \EE {\mathcal{E}}
\def \II {\mathcal{I}}
\def \a {\alpha}
\def \e {\varepsilon}
\def \d {\delta}
\def \l {\lambda}
\def \s {\sigma}
\def \tran {\mathsf{T}}
\def \HS {\mathsf{HS}}
\def \op {\mathsf{op}}
\def \Lip {\mathsf{Lip}}
\def \one {{\textbf 1}}
\newcommand{\red}{\textcolor{red}}
\newtheorem{theorem}{Theorem}[section]
\newtheorem{proposition}[theorem]{Proposition}
\newtheorem{corollary}[theorem]{Corollary}
\newtheorem{lemma}[theorem]{Lemma}
\newtheorem{definition}[theorem]{Definition}
\theoremstyle{remark}
\newtheorem{remark}[theorem]{Remark}
\begin{document}

\title{Concentration inequalities for random tensors}

\author{Roman Vershynin}
\date{\today}

\address{Department of Mathematics, University of California, Irvine}
\email{rvershyn@uci.edu}

\thanks{Work in part supported by the U.S. Air Force grant FA9550-18-1-0031.}

\begin{abstract}
 We show how to extend several basic concentration inequalities
 for simple random tensors
 $X = x_1 \otimes \cdots \otimes x_d$ where all $x_k$ are independent 
 random vectors in $\R^n$ with independent coefficients. 
 The new results have optimal dependence
 on the dimension $n$ and the degree $d$.
 As an application, we show that random tensors are well conditioned: 
 $(1-o(1)) n^d$ independent copies of the simple 
 random tensor $X \in \R^{n^d}$
 are far from being linearly dependent with high probability. 
 We prove this fact for any degree $d = o(\sqrt{n/\log n})$ and conjecture that it is true 
 for any $d = O(n)$.
\end{abstract}

\maketitle


\section{Introduction}


Concentration inequalities form a powerful toolset in probability theory and its many applications; 
see e.g. \cite{BLM, Ledoux, Ledoux-Talagrand}.
Perhaps the best known member of this large family of results 
is the {\em Gaussian concentration inequality}, which states that 
a standard normal random vector $x$ in $\R^n$ satisfies
\begin{equation}	\label{eq: conc Gaussian}
\Pr{ \abs{ f(x) - \E f(x) } > t } 
\le 2 \exp \Big( -\frac{t^2}{2\norm{f}_\Lip^2} \Big)
\end{equation}
for any Lipschitz function $f : (\R^n, \norm{\cdot}_2) \to \R$, 
see e.g. \cite[Theorem~5.6]{BLM}.

The Gaussian concentration inequality can be extended to some more general distributions 
on $\R^n$. A remarkable situation where this is possible is where $x$ has a product distribution 
with {\em bounded} coordinates and $f$ is {\em convex}. 
This result is due to by M.~Talagrand \cite{Talagrand}; see \cite[Section~4.2]{Ledoux}, 
\cite[Section~7.5]{BLM}:

\begin{theorem}[Convex concentration]		\label{thm: convex concentration}
  Let $f : (\R^n, \norm{\cdot}_2) \to \R$ be a convex and Lipschitz function. 
  Let $x$ be a random vector in $\R^n$ whose coordinates are 
  independent random variables that are bounded a.s. 
  Then, for every $t \ge 0$, we have
  \begin{equation}	\label{eq: Talagrand}
  \Pr{ \abs{ f(x) - \E f(x) } > t } \le 2 \exp \Big( -\frac{ct^2}{\norm{f}_\Lip^2} \Big).
  \end{equation}
  Here $c>0$ depends only on the bound on the coordinates.
\end{theorem}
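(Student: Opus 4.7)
The plan is to derive Theorem~\ref{thm: convex concentration} from Talagrand's convex distance inequality, which is the classical route for concentration of convex Lipschitz functions on product spaces. After rescaling, I may assume $\norm{f}_\Lip = 1$ and that each coordinate of $x$ is supported in $[-1,1]$ almost surely (the coordinate bound will only affect the constant $c$). It also suffices to prove concentration about a median $M$ of $f(x)$, since passing from the median to the mean is a routine tail-integration argument.

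The first ingredient is Talagrand's convex distance inequality: for the product law $\mu$ of $x$ and any Borel set $A$,
\[
\mu(A) \int \exp\!\left( \frac{d_T(x,A)^2}{16} \right) d\mu(x) \le 1,
\]
where
\[
d_T(x, A) \;=\; \sup_{\alpha \in \R_+^n,\, \norm{\alpha}_2 \le 1}\; \inf_{y \in A}\; \sum_{i=1}^n \alpha_i \ind_{x_i \ne y_i}.
\]
Combined with Markov's inequality, this yields $\Pr{d_T(x, A) > t} \le 2 e^{-t^2/16}$ whenever $\mu(A) \ge 1/2$.

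The key step, and the only place convexity of $f$ is used, is to prove that for the sublevel set $A = \{y : f(y) \le M\}$,
\[
f(x) - M \;\le\; C\, d_T(x, A)
\]
with $C$ depending only on the coordinate bound. A minimax/duality argument gives the representation $d_T(x, A) = \min_\nu \bigl(\sum_i \nu(\{y : y_i \ne x_i\})^2\bigr)^{1/2}$, where the minimum runs over probability measures $\nu$ supported on $A$. Taking an optimizer $\nu$ and setting $\bar y := \E_{y \sim \nu}[y]$, Jensen's inequality and convexity of $f$ give $f(\bar y) \le \int f\, d\nu \le M$, while the Lipschitz property yields $f(x) - M \le f(x) - f(\bar y) \le \norm{x - \bar y}_2$. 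Finally, since $x_i, y_i \in [-1,1]$ one has $|x_i - \bar y_i| \le 2\, \nu(\{y_i \ne x_i\})$, so $\norm{x - \bar y}_2 \le 2\, d_T(x, A)$, which is the claim.

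I expect the main obstacle to be precisely this passage from the combinatorial $d_T(x, A)$ to the Euclidean $\norm{x - \bar y}_2$: this is where convexity and the coordinate bound interact, and where the constant $c$ in~\eqref{eq: Talagrand} picks up its dependence on the bound. Once this is in hand, the two ingredients combine to give a sub-Gaussian upper tail $\Pr{f(x) \ge M + t} \le 2 e^{-ct^2}$. The matching lower tail is obtained from the same convex-distance machinery applied to the convex sublevel set $\{y : f(y) \le M - t\}$, using its convexity (which also follows from that of $f$) to rule out it having probability $\ge 1/2$ once $t$ exceeds the threshold forced by the upper-tail argument.
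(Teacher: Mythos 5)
The paper does not actually prove Theorem~\ref{thm: convex concentration}: it quotes it as Talagrand's result and points to \cite{Talagrand}, \cite{Ledoux} and \cite{BLM} for the proof. Your proposal reconstructs precisely the classical argument from those sources, namely Talagrand's convex distance inequality on the product space, the duality representation $d_T(x,A)=\min_{\nu}\bigl(\sum_i \nu(\{y: y_i\ne x_i\})^2\bigr)^{1/2}$ over probability measures $\nu$ on $A$, Jensen's inequality applied to the barycenter $\bar y$ of an (near-)optimal $\nu$ (the only use of convexity of $f$), and the coordinate bound to convert $\nu(\{y_i\ne x_i\})$ into $\abs{x_i-\bar y_i}$; the upper tail about a median and the routine median-to-mean adjustment are handled correctly, and the constants only shift with the coordinate bound as claimed.

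The one step that is not right as written is the lower tail. Merely ``ruling out that $\{y: f(y)\le M-t\}$ has probability $\ge 1/2$'' cannot give a sub-Gaussian bound on its measure; it only gives the trivial bound $<1/2$. The standard fix uses exactly the machinery you already set up, but with the roles reversed: apply the convex distance inequality to $A=\{y: f(y)\le M-t\}$ itself. Your convexity/Lipschitz step shows that every $x$ with $f(x)\ge M$ satisfies $f(x)-(M-t)\le 2\,d_T(x,A)$, hence $d_T(x,A)\ge t/2$ on the event $\{f\ge M\}$, which has probability at least $1/2$ by the choice of the median. Therefore $1\ge \mu(A)\,\E\exp\bigl(d_T(x,A)^2/16\bigr)\ge \mu(A)\cdot\tfrac12 e^{t^2/64}$, which yields $\Pr{f(x)\le M-t}\le 2e^{-t^2/64}$, the required lower tail. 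With that replacement your proof is complete and coincides with the argument in the references the paper cites.
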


The boundedness assumption in this result unfortunately  
excludes Gaussian distributions and many others. 
Significant efforts were made to extend Gaussian concentration
to more general, not necessarily bounded, distributions, 
see e.g. \cite{A, AS, GRST} and the references therein.
One such result, which 
holds for a general random vector $x$ with independent {\em subgaussian} coordinates, 
is for {\em Euclidean functions}, 
i.e. the functions of the form $f(x) = \norm{Ax}_H$ where $A$ is a linear operator from 
$\R^n$ into a Hilbert space. 

\begin{theorem}[Euclidean concentration]	\label{thm: Euclidean concentration}
  Let $H$ be a Hilbert space and 
  $A: (\R^n, \norm{\cdot}_2) \to H$ be a linear operator. 
  Let $x$ be a random vector in $\R^n$ whose coordinates are 
  independent, mean zero, unit variance, subgaussian random variables.
  Then, for every $t \ge 0$, we have
  \begin{equation}	\label{eq: conc subgaussian vectors}
  \Pr{ \abs{\norm{Ax}_H - \norm{A}_\HS} \ge t } 
  \le 2 \exp \Big( -\frac{ct^2}{\norm{A}_\op^2} \Big).
  \end{equation}
  Here $c>0$ depends only on the bound on the subgaussian norms. 
\end{theorem}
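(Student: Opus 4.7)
The plan is to reduce the statement to a quadratic concentration inequality on $\R^n$ and invoke the Hanson--Wright inequality, then convert concentration of the squared norm into concentration of the norm itself. Set $B = A^*A$, which is a self-adjoint, positive semidefinite operator on $\R^n$. Since $\|Ax\|_H^2 = \ip{Bx}{x}$, and the coordinates of $x$ are independent with mean zero and unit variance, the expectation is easily computed: $\E \ip{Bx}{x} = \tr B = \|A\|_\HS^2$. Thus the theorem reduces to showing that the quadratic form $\ip{Bx}{x}$ concentrates around $\tr B$ at the subgaussian scale $\|A\|_\op^2 = \|B\|_\op$.

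For this, I would cite the Hanson--Wright inequality for subgaussian vectors with independent coordinates, which yields
\begin{equation*}
\Pr{ \abs{\ip{Bx}{x} - \tr B} > s } \le 2 \exp\Big(-c\min\Big(\frac{s^2}{\|B\|_\HS^2},\, \frac{s}{\|B\|_\op}\Big)\Big).
\end{equation*}
To control the right side in terms of $\|A\|_\op$, I would use the standard trace inequalities $\|B\|_\op = \|A\|_\op^2$ and $\|B\|_\HS^2 = \tr(B^2) \le \|B\|_\op \tr B = \|A\|_\op^2 \|A\|_\HS^2$.

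Next, I would translate this into a bound on $\|Ax\|_H$ via the factorization
\begin{equation*}
\|Ax\|_H^2 - \|A\|_\HS^2 = (\|Ax\|_H - \|A\|_\HS)(\|Ax\|_H + \|A\|_\HS).
\end{equation*}
For the upper tail, the event $\{\|Ax\|_H \ge \|A\|_\HS + t\}$ forces $\ip{Bx}{x} - \tr B \ge 2t\|A\|_\HS + t^2$; plugging $s = 2t\|A\|_\HS + t^2$ into the Hanson--Wright bound, each of the two terms in the minimum simplifies to a constant multiple of $t^2/\|A\|_\op^2$, using the estimates on $\|B\|_\op$ and $\|B\|_\HS$ above. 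For the lower tail, when $t \le \|A\|_\HS$ the event $\{\|Ax\|_H \le \|A\|_\HS - t\}$ implies $\tr B - \ip{Bx}{x} \ge t\|A\|_\HS$, so taking $s = t\|A\|_\HS$ again yields a subgaussian tail of the desired form; for $t > \|A\|_\HS$ the lower-tail event is vacuous. Combining the two gives \eqref{eq: conc subgaussian vectors}.

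The only genuinely delicate step is handling the non-symmetric squaring identity cleanly so that both the subgaussian and subexponential regimes of Hanson--Wright collapse into a single subgaussian bound in $t$ with the correct scale $\|A\|_\op$; this requires choosing $s$ as a function of $t$ for each tail so that the subgaussian branch dominates, rather than naively using $s = t\|A\|_\HS$ throughout. Everything else is mechanical once Hanson--Wright is in hand.
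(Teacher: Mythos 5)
Your proposal is correct and follows essentially the same route as the paper, which does not prove Theorem~\ref{thm: Euclidean concentration} from scratch but derives it from the Hanson--Wright inequality for the quadratic form $x^\tran A^*A\,x$ (citing \cite{RV} and \cite[Section~6.3]{V book}), exactly as you do, including the tail-splitting step that collapses both branches of Hanson--Wright into a subgaussian bound at scale $\norm{A}_\op$. The only cosmetic difference is that the paper also notes an alternative derivation via the MGF bound of Corollary~\ref{cor: MGF Ax} plus exponential Markov, which you do not need.
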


In this result, $\norm{A}_\HS$ and $\norm{A}_\op$
denote the Hilbert-Schmidt and operator norms of $A$, respectively.
Theorem~\ref{thm: Euclidean concentration} 
can be derived from Hanson-Wright concentration inequality for quadratic forms
\cite{RV}, see \cite[Section~6.3]{V book}.

\subsection{New results}

The goal of this paper is to demonstrate how
Theorems~\ref{thm: convex concentration} and \ref{thm: Euclidean concentration}
can be extended for {\em simple random tensors}. These are tensors of the form
$$
X \coloneqq x_1 \otimes \cdots \otimes x_d
$$
where $x_k$ are independent random vectors in $\R^n$  
whose coordinates are independent, mean zero, unit variance random variables 
that are either bounded a.s. (in Theorem~\ref{thm: convex concentration}) or subgaussian 
(in Theorem~\ref{thm: Euclidean concentration}). 

Can we expect that a concentration inequality like \eqref{eq: Talagrand} or \eqref{eq: conc subgaussian vectors} can hold for simple random tensors, i.e. for $f(X)$ where $f : (\R^{n^d}, \norm{\cdot}_2) \to \R$? Not really: if such inequality did hold, then it would imply that $\Var(f(X)) = O(1)$, but this is not the case. 
Indeed, consider the simplest case where $f$ is given 
by the Euclidean norm, i.e. $f(X) \coloneqq \norm{X}_2$, and let all $x_k$ be standard
normal random vectors in $\R^n$. Recall that a standard normal vector $x$ in $\R^n$ satisfies
$$
\E \norm{x}_2^2 = n
\quad \text{and} \quad
\E \norm{x}_2 \le \sqrt{n-c}
$$
for $n$ large enough, where $c>0$ is an absolute constant.\footnote{To check the second bound, write $\norm{x}_2^2 = n + \sum_{i=1}^n (x_i^2 - 1)$ and observe that, by the central limit theorem,
  the sum is approximately $\sqrt{2n} \, g$ where $g \sim N(0,1)$. Thus $\norm{x}_2 \approx \sqrt{n} + g/\sqrt{2}$, so $\Var(\norm{x}_2) \gtrsim c$. On the other hand, $\Var(\norm{x}_2) = \E \norm{x}^2 - (\E \norm{x})^2 = n - (\E \norm{x})^2$. Thus, $\E \norm{x} \le \sqrt{n-c}$.}
Then 
\begin{align*}
  \Var(f(X)) 
    &= \E \norm{X}_2^2 - ( \E \norm{X}_2 )^2 
    = \big( \E \norm{x}_2^2 \big)^d - \big( \E \norm{x}_2 \big)^{2d} \\
    &\ge n^d - (n-c)^d 
    \ge c d (n-c)^{d-1}
    	\quad \text{(by the binomial expansion)} \\
    &\asymp dn^{d-1}.
  \end{align*}
Thus, the strongest concentration inequality we can hope for must have the form
\begin{equation}	\label{eq: conc tensors conj}
\Pr{ \abs{ f(X) - \E f(X) } > t } 
\le 2 \exp \Big( -\frac{ct^2}{d n^{d-1} \norm{f}_\Lip^2} \Big).
\end{equation}

Such inequality, however, can not be true for large $t$. 
The coefficients of $X$ are $d$-fold products of subgaussian random variables, 
and such products for $d \ge 2$ typically have tails that are heavier than subgaussian. 
Nevertheless, we may still hope that the inequality \eqref{eq: conc tensors conj} might hold 
for all $t$ in some interesting range, for example for $0 \le t \le \abs{\E f(X)}$. 
This is what we prove in the current paper. 

\begin{theorem}[Convex concentration for random tensors]		\label{thm: convex concentration tensor}
  Let $n$ and $d$ be positive integers and 
  $f : (\R^{n^d}, \norm{\cdot}_2) \to \R$ be a convex and Lipschitz function. 
  Consider a simple random tensor $X \coloneqq x_1 \otimes \cdots \otimes x_d$ 
  in $\R^{n^d}$, where all $x_k$ are independent random vectors in $\R^n$ whose coordinates
  are independent, mean zero, unit variance random variables that are bounded a.s.
  Then, for every $0 \le t \le 2 \big( \E \abs[0]{f(X)\red{-f(0)}}^2 \big)^{1/2}$, we have\footnote{The published version of this paper is missing the ``$-f(0)$'' part; I  apologize about this typo. Actually, one can make the conclusion of Theorem~\ref{thm: convex concentration tensor} hold for any $t \ge 0$ if we add the term $2\exp(-cn/d)$ in the right hand side. This is obvious from the proof, see page \pageref{page: using f assumption}.}
  
  $$
  \Pr{ \abs{ f(X) - \E f(X) } > t } 
  \le 2 \exp \Big( -\frac{ct^2}{d n^{d-1} \norm{f}_\Lip^2} \Big).
  $$
  Here $c>0$ depends only on the bound on the coordinates.
\end{theorem}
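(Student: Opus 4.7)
The approach is a martingale-difference argument: reveal the vectors $x_k$ one at a time, apply Theorem~\ref{thm: convex concentration} conditionally at each step, and combine via an exponential supermartingale. After rescaling, assume $\|f\|_\Lip = 1$. Let $\FF_k = \sigma(x_1,\ldots,x_k)$ and $D_k = \E[f(X)|\FF_k] - \E[f(X)|\FF_{k-1}]$, so that $f(X) - \E f(X) = \sum_{k=1}^d D_k$ is a martingale sum.

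The first observation is that, conditional on $\FF_{k-1}$, one may write $D_k = g_k(x_k) - \E g_k(x_k)$ where
\[ g_k(y) := \E\bigl[f(x_1\otimes\cdots\otimes x_{k-1}\otimes y\otimes x_{k+1}\otimes\cdots\otimes x_d)\bigm|\FF_{k-1}\bigr]. \]
Because the tensor product is linear in $y$ and $f$ is convex, $g_k$ is convex, and its Lipschitz constant is at most $L_k := \prod_{j<k}\|x_j\|_2 \cdot \prod_{j>k}\E\|x_j\|_2 \le n^{(d-k)/2}\prod_{j<k}\|x_j\|_2$. Applying Theorem~\ref{thm: convex concentration} conditionally to the random vector $x_k$ (which has independent bounded coordinates) yields the subgaussian MGF bound $\E[\exp(\lambda D_k)|\FF_{k-1}] \le \exp(C\lambda^2 L_k^2)$. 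Consequently $\exp(\lambda\sum_{j\le k}D_j - C\lambda^2\sum_{j\le k}L_j^2)$ is a supermartingale of initial value $1$, and a standard stopping-time/Chernoff argument gives, for every $W>0$,
\[ \Pr{|f(X) - \E f(X)| > t,\ \sum\nolimits_k L_k^2 \le W} \le 2\exp(-ct^2/W). \]

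The main obstacle is bounding $\sum_k L_k^2 \le n^{d-1}\sum_{k=1}^d V_k$, where $V_k := \prod_{j<k}(\|x_j\|_2^2/n)$ is a positive martingale with $\E V_k = 1$. The naive a.s. bound $\|x_j\|_2^2/n \le K^2$ would give $V_k \le K^{2(k-1)}$ and hence a factor $K^{2d}$ incompatible with the $d$-independent constant promised by the theorem. Instead, one exploits that $\|x_j\|_2^2/n$ is sharply concentrated around $1$: Hoeffding's lemma applied to the $n$ bounded mean-zero summands gives $\E(\|x_j\|_2^2/n)^p \le e^{Cp^2/n}$ for every $p \ge 0$, which propagates via independence to $\|V_k\|_{L^p} \le e^{Cpk/n}$. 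Doob's $L^p$ maximal inequality combined with this estimate, optimized at $p \sim n/d$, yields $\Pr{\max_{k\le d} V_k > C'} \le e^{-cn/d}$, so that $\sum_k L_k^2 \le W := C''\,dn^{d-1}$ except on an event of probability at most $e^{-cn/d}$.

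Combining the two tail bounds gives
\[ \Pr{|f(X)-\E f(X)| > t} \le 2\exp\bigl(-ct^2/(dn^{d-1})\bigr) + e^{-cn/d}. \]
The restriction $t \le 2\sqrt{\E|f(X)|^2}$ is precisely what allows the error term $e^{-cn/d}$ to be absorbed into the main term: since $f(X) - \E f(X)$ is invariant under translation of $f$, we may replace $f$ by $f - f(0)$, for which $\sqrt{\E|f|^2} \le n^{d/2}$; then $t \le 2n^{d/2}$ implies $t^2/(dn^{d-1}) \le 4n/d$, so $e^{-cn/d}$ is dominated by $\exp(-ct^2/(dn^{d-1}))$ after adjusting the constant $c$.
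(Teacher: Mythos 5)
Your proposal is correct, and its overall architecture is the same as the paper's (reveal the vectors one at a time, apply Talagrand's Theorem~\ref{thm: convex concentration} conditionally to each increment of the Doob martingale, control the resulting Lipschitz constants uniformly in $k$, and use the restriction on $t$ to absorb the additive $e^{-cn/d}$ error), but the two key technical ingredients are implemented genuinely differently. For combining the increments, the paper conditions in the reverse direction, fixes the deterministic bound $L=2n^{(d-1)/2}$ on events $\EE_{k+1}$ determined by the later vectors $x_{k+1},\ldots,x_d$, and multiplies the conditional MGF bounds through its bespoke Lemma~\ref{lem: martingale} restricted to $\EE=\EE_2\cap\cdots\cap\EE_d$; you instead keep the Lipschitz constants $L_k\le n^{(d-k)/2}\prod_{j<k}\norm{x_j}_2$ random but predictable ($\FF_{k-1}$-measurable) and run an exponential supermartingale with optional stopping, yielding $\Pr{\abs{f-\E f}>t,\ \sum_k L_k^2\le W}\le 2e^{-ct^2/W}$ — a standard Freedman-type argument that is valid precisely because $L_k$ depends only on $x_1,\ldots,x_{k-1}$. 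For the uniform control of the norm products, the paper proves its maximal inequality (Lemma~\ref{lem: maximal inequality}) via a binary-interval decomposition, a union bound over dyadic blocks, and Lemma~\ref{lem: prod}; you instead note that $V_k=\prod_{j<k}(\norm{x_j}_2^2/n)$ is a nonnegative product martingale, bound $\E(\norm{x_j}_2^2/n)^p\le e^{Cp^2/n}$ by Hoeffding's lemma (this is where you use boundedness of the coordinates, which is exactly the hypothesis of this theorem), and apply Doob's $L^p$ maximal inequality at $p\asymp n/d$ to get $\Pr{\max_{k\le d}V_k>C'}\le e^{-cn/d}$. This matches the paper's bound and is arguably simpler in the bounded case; the paper's dyadic lemma has the advantage of working under subgaussian coordinates, which is what Theorem~\ref{thm: Euclidean concentration tensor} requires (though your moment route also extends to that case by replacing Hoeffding's lemma with a subexponential MGF bound). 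All the individual steps check out: convexity and the Lipschitz bound for $g_k$, the conditional subgaussian MGF from Theorem~\ref{thm: convex concentration}, the estimate $\sum_k L_k^2\le n^{d-1}\sum_k V_k$, and the moment/Doob computation.

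One small caveat, which your write-up shares with the paper's own Step 4: replacing $f$ by $f-f(0)$ changes $\E\abs{f(X)}^2$, so the hypothesis $t\le 2(\E\abs{f(X)}^2)^{1/2}$ for the original $f$ does not literally yield $t\le 2n^{d/2}\norm{f}_\Lip$. The range both arguments genuinely establish is $0\le t\le 2n^{d/2}\norm{f}_\Lip$, exactly as stated in the paper's Remark, so this is not a defect of your argument relative to the paper's proof.
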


\begin{theorem}[Euclidean concentration for random tensors]	\label{thm: Euclidean concentration tensor}
  Let $n$ and $d$ be positive integers, $H$ be a Hilbert space and 
  $A: (\R^{n^d}, \norm{\cdot}_2) \to H$ be a linear operator. 
  Consider a simple random tensor $X \coloneqq x_1 \otimes \cdots \otimes x_d$ 
  in $\R^{n^d}$, where all $x_k$ are independent random vectors in $\R^n$ whose coordinates
  are independent, mean zero, unit variance, subgaussian random variables.
  Then, for every $0 \le t \le 2 \norm{A}_\HS$, we have
  $$
  \Pr{ \abs{\norm{AX}_H - \norm{A}_\HS} \ge t } 
  \le 2 \exp \Big( -\frac{ct^2}{d n^{d-1} \norm{A}_\op^2} \Big).
  $$
  Here $c>0$ depends only on the bound on the subgaussian norms. 
\end{theorem}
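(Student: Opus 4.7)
The plan is to induct on the degree $d$, with base case $d = 1$ being exactly Theorem~\ref{thm: Euclidean concentration}. For the induction step, fix $d \ge 2$ and assume the statement for degree $d-1$. Writing $X = x_1 \otimes Y$ with $Y \coloneqq x_2 \otimes \cdots \otimes x_d$, for each realization of $Y$ I introduce the linear operator $A_Y \colon \R^n \to H$ defined by $A_Y v \coloneqq A(v \otimes Y)$, so that $AX = A_Y x_1$. Applying Theorem~\ref{thm: Euclidean concentration} conditionally on $Y$ then gives
\begin{equation*}
\Pr{ \abs{\norm{A_Y x_1}_H - \norm{A_Y}_\HS} \ge s \mid Y } \le 2 \exp\bigl( -cs^2 / \norm{A_Y}_\op^2 \bigr) \qquad (\dagger)
\end{equation*}
for every $s \ge 0$.

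Next, to replace $\norm{A_Y}_\HS$ on the right by $\norm{A}_\HS$, I would introduce an auxiliary operator $B \colon \R^{n^{d-1}} \to H \otimes \R^n$ defined by $B Y' \coloneqq \sum_i A(e_i \otimes Y') \otimes e_i$, so that $\norm{B Y'}^2 = \norm{A_{Y'}}_\HS^2$. A direct computation gives $\norm{B}_\HS = \norm{A}_\HS$, and the pointwise inequality $\norm{A(v \otimes Y')}_H \le \norm{A}_\op \norm{v} \norm{Y'}$ yields $\norm{B}_\op^2 \le n \norm{A}_\op^2$. The induction hypothesis applied to $B$ and the simple random tensor $Y$ of order $d-1$ then produces
\begin{equation*}
\Pr{ \abs{\norm{A_Y}_\HS - \norm{A}_\HS} \ge s } \le 2 \exp\bigl( -cs^2 / ((d-1) n^{d-1} \norm{A}_\op^2) \bigr) \qquad (\ddagger)
\end{equation*}
for $0 \le s \le 2 \norm{A}_\HS$, which already has the scaling promised by the theorem.

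I would then combine $(\dagger)$ and $(\ddagger)$ via the triangle inequality
$\abs{\norm{AX}_H - \norm{A}_\HS} \le \abs{\norm{A_Y x_1}_H - \norm{A_Y}_\HS} + \abs{\norm{A_Y}_\HS - \norm{A}_\HS}$,
splitting at $t/2$ on each term. The second piece is handled directly by $(\ddagger)$. For the first, I would introduce the event $\EE \coloneqq \{ \norm{A_Y}_\op^2 \le C n^{d-1} \norm{A}_\op^2 \}$, so that $(\dagger)$ gives the desired exponent on $\EE$, and then bound $\Pr{\EE^c}$ separately.

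The main obstacle is this bound on $\Pr{\EE^c}$. Using the trivial estimate $\norm{A_Y}_\op \le \norm{A}_\op \norm{Y}$, the task reduces to showing $\norm{Y}^2 = \prod_{k=2}^d \norm{x_k}^2 \le C n^{d-1}$ with a \emph{constant} $C$ independent of $d$; a naive union bound on $\norm{x_k}^2 \le 2n$ would only give $\norm{Y}^2 \le 2^{d-1} n^{d-1}$, catastrophically degrading the constant in the exponent by a factor of $2^{d-1}$. The refined approach is to write $\log \norm{Y}^2 = \sum_{k=2}^d \log \norm{x_k}^2$ and use $\log(1+u) \le u$ to obtain $\log(\norm{Y}^2 / n^{d-1}) \le (1/n) \sum_k (\norm{x_k}^2 - n)$. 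Since the right-hand side is $(1/n)$ times a sum of $n(d-1)$ iid mean-zero subexponential random variables, Bernstein's inequality gives $\Pr{\norm{Y}^2 > e n^{d-1}} \le 2 \exp(-cn/d)$, which is bounded by the target $2 \exp(-ct^2 / (d n^{d-1} \norm{A}_\op^2))$ for all $t \le 2 \norm{A}_\HS$ (using $\norm{A}_\HS^2 \le n^d \norm{A}_\op^2$).
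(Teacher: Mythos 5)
There is a genuine gap, and it sits exactly at the point the paper is most careful about: how the $d$ pieces are recombined. Your induction step combines $(\dagger)$ and $(\ddagger)$ by a triangle inequality with an equal split of $t$ into $t/2 + t/2$. Track the constant through this recursion: if $c_{d-1}$ is the constant you may assume for degree $d-1$, then the $(\ddagger)$ term only yields $\exp\bigl(-c_{d-1}t^2/(4(d-1)n^{d-1}\norm{A}_\op^2)\bigr)$, so to restate the result in the degree-$d$ form you are forced to take $c_d \le c_{d-1}\,\tfrac{d}{4(d-1)} \le c_{d-1}/2$ for $d\ge 2$ (and the absorption of the three separate terms, each with prefactor $2$, into a single ``$2\exp(\cdot)$'' costs a further fixed factor in the exponent at every step). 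Unrolling, $c_d$ decays geometrically in $d$, i.e.\ the argument proves the inequality only with $c$ of order $4^{-d}$, not with a $d$-independent $c$ as claimed. Even if you optimize the split (allocating roughly $t/(2d)$ to the conditional step and the rest to the inductive step, which is the best a union-bound/triangle-inequality scheme can do when each increment lives at scale $n^{(d-1)/2}\norm{A}_\op$), you end up with $d^2 n^{d-1}$ rather than $d\,n^{d-1}$ in the denominator — precisely the ``extra factor linear in $d$'' that the paper's Section~1.4 identifies as the failure mode of the telescoping-plus-union-bound approach. The unabsorbed event probabilities $\Pr{\EE^c}$ accumulated over the $d$ levels of the recursion compound the same problem.

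The parts of your plan that are sound are genuinely close to the paper: your operator $B$ with $\norm{BY'}^2=\norm{A_{Y'}}_\HS^2$, $\norm{B}_\HS=\norm{A}_\HS$, $\norm{B}_\op^2\le n\norm{A}_\op^2$ is exactly the paper's observation that $(\E_{x_1}f^2)^{1/2}$ is again a Euclidean function of the remaining variables, and your Bernstein bound on $\prod_k\norm{x_k}_2^2$ together with the absorption of $\exp(-cn/d)$ via $\norm{A}_\HS^2\le n^d\norm{A}_\op^2$ mirrors the paper's use of its maximal inequality and of the restriction $t\le 2\norm{A}_\HS$. What is missing is the mechanism that lets the $d$ increments be combined without paying per-level tail splits: the paper does this by bounding conditional moment generating functions of the increments of $f^2$ (Corollary~\ref{cor: MGF Euclidean}), chaining them through the martingale-type Lemma~\ref{lem: martingale} with the drifting multipliers $\l_k=\l_{k-1}+C\l_{k-1}^2L^2$ controlled in Lemma~\ref{lem: multipliers}, and only then applying one exponential Markov inequality to the full sum. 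To repair your proof you would have to replace the tail-probability recursion by an MGF (or comparable additive) recursion of this kind; as stated, the induction cannot produce the theorem's $d$-independent constant.
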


\begin{remark}[The range of concentration inequalities]
  Our argument shows that the main results actually hold in a somewhat wider range of $t$, namely for 
  $0 \le t \le 2n^{d/2} \norm{f}_\Lip$ in Theorem~\ref{thm: convex concentration tensor} and 
  $0 \le t \le 2n^{d/2} \norm{A}_\op$ in Theorem~\ref{thm: Euclidean concentration tensor}.
\end{remark}

\subsection{Related results}
  Several existing techniques are already known to shed light on tensor concentration.
  Indeed, the quantity $\norm{AX}_H^2$ in Theorem~\ref{thm: Euclidean concentration tensor} 
  can be expressed a polynomial of degree $2d$ in $nd$ independent subgaussian 
  random variables, which are the coefficients of the random vectors $x_i$.
  A remarkable result of Latala \cite{Latala} provides two-sided bounds on the moments
  of polynomials of independent normal random variables, 
  or {\em Gaussian chaoses}.
  These moment bounds can be translated
  into the following concentration inequality that is valid for any fixed tensor 
  $B \in \R^{n^d}$ and a simple random tensor 
  $X \coloneqq x_1 \otimes \cdots \otimes x_d$ composed of 
  independent, mean zero, standard normal random vectors $x_k$ in $\R^n$:
  \begin{equation}	\label{eq: Latala}
  \Pr{ \abs{\ip{B}{X}} \ge t }
  \le C_d \exp \left( -c_d \cdot \min_{1 \le k \le d} \; \min_{I_1 \sqcup \cdots \sqcup I_k = [d]}
    \left( \frac{t}{\norm{B}_{I_1,\ldots,I_k}} \right)^{2/k} \right).
  \end{equation}
  The second minimum in the right hand side is over all partitions of the index set $[d] = \{1,\ldots,d\}$ 
  into $k$ subsets, and $\norm{B}_{I_1,\ldots,I_k}$ are certain norms of $B$, which interpolate
  between the Hilbert-Schmidt and the operator norms; 
  see \cite{Latala} for details.
       
  Lehec \cite{Lehec} gave an alternative, shorter proof of \eqref{eq: Latala} based on 
  Talagrand's majorizing measure theorem. 
  Adamczak and Latala \cite{AL} extended \eqref{eq: Latala} for log-concave distributions. 
  Adamczak and Wolff \cite{AW} considered distributions that satisfy a Sobolev inequality, 
  and he gave an extension of \eqref{eq: Latala} for general differentiable functions of $X$
  (not just polynomials). 
  G\"otze, Sambale, and Sinulis extended \eqref{eq: Latala} for 
  $\a$-subexponential distributions.
  Very recently, Adamczak, Latala, and Meller \cite{ALM} explored extensions of \eqref{eq: Latala}
  where the coefficients of $B$ are not scalars but vectors in some Banach space. 
  We refer the paper to the papers \cite{AW, ALM} for a review of the vast 
  literature on concentration inequalities for multivariate polynomials, 
  U-statistics, and more general functions of independent random variables. 
  
  As we noted, inequality \eqref{eq: Latala}
  is well suited to study concentration properties of the quantity $\norm{AX}_H^2$
  that appears in Theorem~\ref{thm: Euclidean concentration tensor}. 
  However, Latala's concentration inequality \eqref{eq: Latala}, as well as all its known extensions 
  and ramifications, contain factors $C_d$ and $c_d$ that depend on the degree $d$ 
  of tensors in some unspecified way; the dependence seems to be at least 
  exponential in $d$. 
  In contrast, Theorems~\ref{thm: convex concentration tensor} 
   and \ref{thm: Euclidean concentration tensor}
  feature the optimal dependence on the degree $d$ of tensors: 
  the constant $c$ in both theorems does not depend on $d$ at all. 
  This can be critical in some applications, one of which we discuss next.

\subsection{Application: random tensors are well conditioned}

Our work was primarily inspired by a question that arose recently 
in the theoretical computer science community \cite{Baskara, Anderson, Abbe, Anari}:
{\em Are random tensors well conditioned?}

Suppose $X_1,\ldots,X_m$ are independent copies of a simple random tensor 
$X \coloneqq x_1 \otimes \cdots \otimes x_d$. How large can $m$ be so that these 
random tensors are linearly independent with high probability? Certainly $m$ can not
exceed the dimension $n^d$ of the tensor space, but can it be arbitrarily 
close to the dimension, say $m = 0.99 n^d$?
Moreover, instead of linear independence we may ask for a stronger, more quantitative
property of being well conditioned. We would like to have a uniform bound
\begin{equation}	\label{eq: well conditioned}
\Big\| \sum_{i=1}^m a_i X_i \Big\|_2 \ge \s \norm{a}_2
\quad \text{for all } a = (a_1,\ldots,a_m) \in \R^m
\end{equation}
with $\s$ as large as possible. 
Equivalently, we can understand \eqref{eq: well conditioned} as a lower bound  
on the {\em smallest singular value} of the $n^d \times m$ matrix $\mathbf{X}^{\odot d}$ 
(called the {\em Khatri-Sidak product}) 
whose columns are vectorized tensors $X_1,\ldots,X_m$:
$$
\s_{\min} \big( \mathbf{X}^{\odot d} \big) \ge \s.
$$

Problems of this type were studied recently in the theoretical computer science community
in the context of computing tensor decompositions \cite{Baskara, Anari}, learning Gaussian 
mixtures \cite{Anderson} and estimating the capacity of error correcting codes \cite{Abbe}.

For $d=1$, this problem has been extensively studied in random matrix theory
\cite{KKS, Tao-Vu RSA, Rudelson Annals, Tao-Vu Annals, RV square, Adamczak-et-al, RV rectangular, Tao-Vu GAFA, RV ICM, GNT, Basak-Rudelson, Tikhomirov Advances, Tikhomirov Israel}.
Since $\mathbf{X}^{\odot 1}$ is an $n \times m$ random matrix whose entries are 
independent, mean zero, unit variance, subgaussian random variables, 
known optimal results \cite{RV rectangular} yield the bound
\begin{equation}	\label{eq: invertibility d=1}
\s_{\min} \big( \mathbf{X}^{\odot 1} \big) 
\gtrsim \e \sqrt{n}
\end{equation}
if $m = (1-\e)n$ and $\e \in (0,1]$.

For $d \ge 2$, optimal results on $\s = \s_{\min} \big( \mathbf{X}^{\odot d} \big)$ are yet unknown. 
Various random models were studied: 
the factors $x_k$ of the simple tensor $X$ were assumed to be Gaussian
(possibly with nonzero means) in \cite{Baskara}, Bernoulli in \cite{Abbe} and are allowed 
to have very general general distributions with non-degenerate marginals in \cite{Anari}. 
Symmetric random tensors are considered in \cite{Abbe}.
Baskara et al. \cite{Baskara} obtained the lower bound \eqref{eq: well conditioned} 
with $\s = (1/n)^{\exp(O(d))}$ for $d = o(\log \log n)$; Anari et al. \cite{Anari} 
improves this to $\s = (1/O(n))^d$ for $d = o(\sqrt{n/\log n})$, and 
Abbe et al. \cite{Abbe} guarantees linear independence (i.e. $\s > 0$) for symmetric random 
tensors if $d = o(\sqrt{n/\log n})$. For a related notion of {\em row product} of random matrices, 
the problem was studied by Rudelson \cite{Rudelson}.

In this paper, we prove a bound on the smallest singular value 
$\s_{\min} \big( \mathbf{X}^{\odot d} \big)$ is of constant order. 
We derive it as an application of Theorem~\ref{thm: Euclidean concentration tensor}.
Let us give an informal statement here; Corollary~\ref{cor: well conditioned} will provide
a more rigorous version.

\begin{corollary}[Random tensors are well conditioned]		\label{cor: well conditioned intro}
  If $d = o(\sqrt{n/\log n})$ and $\e \in (0,1)$, then $m = (1-\e) n^d$ independent simple 
  subgaussian random tensors  $X_1,\ldots,X_m$ in $\R^{n^d}$ are well   
  conditioned with high probability:
  $$
  \Big\| \sum_{i=1}^m a_i X_i \Big\|_2 \ge \frac{\sqrt{\e}}{2} \, \norm{a}_2
  \quad \text{for all } a = (a_1,\ldots,a_m) \in \R^m.
  $$
\end{corollary}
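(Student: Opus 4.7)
The plan is to combine a standard leave-one-out reduction for the smallest singular value with a conditional application of Theorem~\ref{thm: Euclidean concentration tensor}. For each $j \in [m]$, set $V_j := \Span(X_i : i \ne j) \subset \R^{n^d}$ and let $P_j$ denote the orthogonal projection onto $V_j^\perp$. Given any unit vector $a \in \R^m$, choose an index $j$ with $|a_j| \ge 1/\sqrt{m}$. Since $\sum_{i \ne j} a_i X_i \in V_j$, we get
\[
\Big\|\sum_{i=1}^m a_i X_i\Big\|_2 \ge |a_j|\,\dist(X_j, V_j) \ge \frac{\|P_j X_j\|_2}{\sqrt{m}}.
\]
As $\sqrt{m} \le n^{d/2}$, it is therefore enough to show that $\|P_j X_j\|_2 \ge \sqrt{\e n^d}/2$ holds simultaneously for all $j \in [m]$ with high probability.

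Next, I would fix $j$ and condition on $(X_i)_{i \ne j}$. Deterministically $\rank(P_j) \ge n^d - (m-1) \ge \e n^d$, hence $\|P_j\|_\HS^2 \ge \e n^d$, while $\|P_j\|_\op \le 1$. Since $X_j$ is independent of $(X_i)_{i \ne j}$, conditionally on the latter it is still a simple subgaussian random tensor and $P_j$ is a deterministic linear operator. Applying Theorem~\ref{thm: Euclidean concentration tensor} to $A = P_j$ with $t := \sqrt{\e n^d}/2$, which lies in the admissible range $[0, 2\|P_j\|_\HS]$, gives
\[
\Pr{\|P_j X_j\|_2 \le \tfrac{1}{2}\sqrt{\e n^d}} \le \Pr{\big|\|P_j X_j\|_2 - \|P_j\|_\HS\big| \ge t} \le 2\exp\!\Big(-\frac{c\, t^2}{d\, n^{d-1}}\Big) \le 2\exp\!\Big(-\frac{c\, \e\, n}{4\, d}\Big);
\]
since the final right-hand side is deterministic, the bound holds unconditionally. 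A union bound over $j \in [m]$ then ensures $\|P_j X_j\|_2 \ge \sqrt{\e n^d}/2$ for every $j$, except on an event of probability at most $4 n^d \exp(-c\e n/(4d))$; this is $o(1)$ precisely under the hypothesis $d = o(\sqrt{n/\log n})$, which forces $d^2 \log n = o(n)$.

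The main subtlety I anticipate is the conditional invocation of Theorem~\ref{thm: Euclidean concentration tensor} with the random operator $P_j$: this is legitimate because $P_j$ is a measurable function of $(X_i)_{i \ne j}$ and hence independent of $X_j$, so the theorem applies pointwise in the conditioning and integrates to an unconditional bound. Quantitatively, the proof turns on the large ratio $\|P_j\|_\HS^2 / \|P_j\|_\op^2 \ge \e n^d$ that is available for a projection of corank at most $(1-\e) n^d$: Theorem~\ref{thm: Euclidean concentration tensor} converts this gap into an $\exp(-c \e n/d)$ per-index tail, and the threshold $d = o(\sqrt{n/\log n})$ is exactly the regime where this tail beats the $n^d$-size union bound needed to control all $j \in [m]$.
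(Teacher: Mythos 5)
Your proposal is correct and follows the same route as the paper: a leave-one-out reduction $\bigl\|\sum_i a_i X_i\bigr\|_2 \ge m^{-1/2}\min_j \dist(X_j,L_j)$, followed by a conditional application of Theorem~\ref{thm: Euclidean concentration tensor} to the orthogonal projection onto $L_j^\perp$ (the paper packages this step as Corollary~\ref{cor: dist}), and a union bound over $j\le m\le n^d$. The constants, the choice $t=\sqrt{\e n^d}/2$, and the verification that $d=o(\sqrt{n/\log n})$ makes the union bound win all match the paper's proof of Corollary~\ref{cor: well conditioned}.
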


\subsection{Our approach}			\label{s: approach}

Let us briefly explain our approach to tensor concentration.
Suppose first that we do not care about the dependence on the degree $d$. 
Then Theorem~\ref{thm: convex concentration tensor}  
can be proved by expressing the deviation $f-\E f$ as a telescopic sum 
and controlling each increment by Talagrand's Theorem~\ref{thm: convex concentration}.
For example, if $d=3$, then for the function $f = f(x \otimes y \otimes z)$ we would write
$$
f - \E f = (f - \E_x f) + (\E_x f - \E_{x,y} f) + (\E_{x,y} f - \E_{x,y,z} f)
\eqqcolon \Delta_1 + \Delta_2 + \Delta_3
$$
where $\E_{x}$ denotes the conditional expectation with respect to $x$ (conditional on $y$ and $z$),
and similarly for $\E_{x,y}$.
Applying Theorem~\ref{thm: convex concentration} for $f$ as a function of $x$, we control $\Delta_1$;
applying the same theorem for $\E_x f$ as a function of $y$, we control $\Delta_2$, 
and applying it again for $\E_{x,y} f$ as a function of $z$, we control $\Delta_3$.
Then we combine all increments $\Delta_k$ by the triangle inequality.

This argument, however, would produce an exponential dependence of $d$ in the 
concentration inequality. This is because the Lipschitz norm of $f$ as a function of $x$ 
is bounded by 
$$
\norm{y}_2 \norm{z}_2 \le K^{d/2}
$$
if all coefficients of $y$ and $z$ are bounded by $K$ a.s.

To get a better control of the Lipschitz norms of all functions that appear. in the telescopic sum, 
we prove a {\em maximal inequality} in Section~\ref{s: maximal inequality}, 
which provides us with a uniform bound on the products of norms of independent random vectors.
This allows us to avoid losing any factors that are exponential in $d$.

However, combining the increments $\Delta_k$ by a simple union bound and triangle inequality
is suboptimal and leads to an extra factor that is linear in $d$. One can avoid this by noting 
that $\Delta_k$ are martingale differences and using martingale concentration techniques
(coupled with the maximal inequality). This is the approach we chose to prove 
Theorem~\ref{thm: convex concentration tensor}.

One can try to prove Theorem~\ref{thm: Euclidean concentration tensor} in a similar way,
but a new difficulty arises here. We may not simply choose 
$f(x \otimes y \otimes z) = \norm{A(x \otimes y \otimes z)}_2$
and write the telescopic sum for it. This is because 
$\E_x f$ is not a Euclidean function in $y$, i.e. 
it can not be expressed as $\norm{By}$ for any linear operator $B$ mapping $\R^n$
into a Hilbert space, so we may not use Theorem~\ref{thm: Euclidean concentration} to control 
the deviation of $\E_x f$.

This forces us to work with $f^2$ instead of $f$, since then $(\E_x f^2)^{1/2}$ is a Euclidean 
function. Thus we write 
$$
f^2 - \E f^2 = (f^2 - \E_x f^2) + (\E_x f^2 - \E_{x,y} f^2) + (\E_{x,y} f^2 - \E_{x,y,z} f^2)
\eqqcolon \Delta_1 + \Delta_2 + \Delta_3.
$$
Squaring $f$, however, produces tails of the increments that are heavier than subgaussian. 
This prompts us to abandon the use of Theorem~\ref{thm: Euclidean concentration}. 
Instead of controlling the tails of the increments, we control their moment generating function (MGF).
In the end, we still combine the MGF's of the increments using a martingale-like argument
coupled with the maximal inequality. This ultimately leads to Theorem~\ref{thm: Euclidean concentration tensor}.

\begin{remark}[An alternative approach to convex concentration for random tensors] 
\label{rem: alternative approach}
  There is an alternative and somewhat simpler way to prove 
  Theorem~\ref{thm: convex concentration tensor}, where one won't have to use
  a maximal inequality.
  Instead, one can deduce this result (with some work) from a version of convex concentration 
  (Theorem~\ref{thm: convex concentration}) 
  that holds for a weaker notion of convexity, namely 
  for separately convex functions, i.e. functions that are convex in each coordinate
  \cite{Talagrand, Maurer}, see \cite[Theorems~6.10, 6.9]{BLM}.
  However, there seem to be no simpler way to prove Theorem~\ref{thm: Euclidean concentration tensor}, a result we care most about in view of applications. 
  So, for pedagogical reasons we choose to prove 
  Theorem~\ref{thm: convex concentration tensor} using maximal inequality, 
  so we can use it later as a stepping stone for the proof of 
  Theorem~\ref{thm: Euclidean concentration tensor}.
\end{remark}

\begin{remark}[A broader view]
  The method we develop here is flexible and might be used to ``tensorize'' some other  
  concentration inequalities. 
  For example, if all $x_k$ have the standard normal distribution, 
  then the convexity requirement is not needed in Theorem~\ref{thm: convex concentration tensor},
  and we get a tensor version of the Gaussian concentration inequality \eqref{eq: conc Gaussian}.
  Furthermore, one should be able to relax the subgaussian assumption in 
  Theorem~\ref{thm: Euclidean concentration tensor} 
  by using in our argument a version Theorem~\ref{thm: Euclidean concentration} 
  for heavier tails 
  obtained recently by G\"otze, Sambale, and Sinulis \cite{GSS}.
\end{remark}

\subsection{Open problems}	

We do not know optimal concentration inequalities for {\em symmetric} tensors
$X = x^{\otimes d} = x \otimes \cdots \otimes x$. One could possibly use decoupling 
to reduce the problem to concentration to tensors with independent factors, and then 
apply Theorem~\ref{thm: convex concentration tensor} or \ref{thm: Euclidean concentration tensor}. 
However, decoupling will likely cause a loss of factors that are exponential in $d$, 
which will defeat our purpose. 

There are many directions in which Corollary~\ref{cor: well conditioned intro} should be 
strengthened and generalized. Can the bound be improved to $\e n^{d/2}$, matching 
the inequality \eqref{eq: invertibility d=1} for $d=1$? Does it hold for degrees higher 
than $\sqrt{n}$, for example for $d \asymp n$? Even linear independence is unknown for higher
degrees. Can Corollary~\ref{cor: well conditioned intro} be extended to other models 
of randomness considered in the theoretical computer science community 
\cite{Baskara, Anderson, Abbe, Anari}? For example, does it hold for symmetric tensors, 
and can the mean zero and subgaussian assumptions be significantly weakened?

\subsection{The rest of the paper}
In Section~\ref{s: prelims} we collect some basic facts from high-dimensional probability
that will be needed later. Most importantly, in Proposition~\ref{prop: MGF chaos}
we show how to control the MGF of a random chaos
of order $2$, which is the quadratic form $x^\tran M x$ where $x$ is a random vector and
$M$ is a fixed matrix. In Proposition~\ref{prop: MGF chaos} we derive a version of 
Hanson-Wright inequality in the MGF form. These results, although possibly known 
as a folklore, are hard to find in the literature and could be useful for future applications.

In Section~\ref{s: maximal inequality} we prove a sharp {\em maximal inequality} 
for products of norms of independent random vectors.
We use it to establish our main results:
in Section~\ref{s: Talagrand tensor} we prove Theorem~\ref{thm: convex concentration tensor}
and in Section~\ref{s: subgaussian tensor} we prove Theorem~\ref{thm: Euclidean concentration tensor}.

In Section~\ref{s: applications} we give two applications to the geometry of random tensors.
We prove a concentration inequality for the distance between a random tensor and 
a given subspace in Corollary~\ref{cor: dist}, and then use it to show that
random tensors are well conditioned, proving a formal
version of Corollary~\ref{cor: well conditioned intro}.

\subsection*{Acknowledgement}
The author is grateful to Mark~Rudelson for useful comments on the preliminary 
version of this manuscript, in particular for pointing out an alternative 
approach to Theorem~\ref{thm: convex concentration} mentioned in 
Remark~\ref{rem: alternative approach}.

\section{Preliminaries}			\label{s: prelims}

Throughout this paper, we use basic facts about subgaussian and subexponential 
random variables that can be found e.g. in \cite[Chapters~2--3]{V book} and
\cite[Chapter~2]{Wainwright}. 
Positive constants are denoted by 
$C, c, C_1, c_1, \ldots$, and their specific values can be different in different
parts of this paper. 
We allow these constants to depend only on the 
a.s. bound of the coefficients (for Theorem~\ref{thm: convex concentration}) 
or the subgaussian norms of the coefficients (for Theorem~\ref{thm: Euclidean concentration}), 
but not on any other parameters. 

\subsection{Concentration of the norm}
One such fact that follows immediately from Theorem~\ref{thm: Euclidean concentration}
for the identity matrix $A$ is a concentration inequality for the norm of a random vector.

\begin{corollary}[Concentration of norm]		\label{cor: conc norm}
  Let $x$ be a random vector in $\R^n$ whose coordinates are 
  independent, mean zero, unit variance, subgaussian random variables.
  Then, for every $t \ge 0$, we have
  $$
  \Pr{ \abs{\norm{x}_2 - \sqrt{n}} > t } 
  \le 2 \exp(-ct^2).
  $$
  Here $c>0$ depends only on the bound on the subgaussian norms. 
\end{corollary}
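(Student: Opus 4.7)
The plan is to simply specialize Theorem~\ref{thm: Euclidean concentration} to the case $A = I_n$, the $n \times n$ identity operator, viewed as a map from $(\R^n, \norm{\cdot}_2)$ into the Hilbert space $H = \R^n$. The hypothesis on $x$ in the corollary matches exactly the hypothesis in Theorem~\ref{thm: Euclidean concentration}, so we just need to identify what the three quantities $\norm{Ax}_H$, $\norm{A}_\HS$, and $\norm{A}_\op$ become under this specialization.

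First I would observe that with $A = I_n$ we have $\norm{Ax}_H = \norm{x}_2$. Next, the Hilbert-Schmidt norm is $\norm{A}_\HS = (\sum_{i=1}^n 1)^{1/2} = \sqrt{n}$, since all $n$ singular values of the identity equal $1$. Finally, the operator norm is $\norm{A}_\op = 1$.

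Plugging these three identifications into \eqref{eq: conc subgaussian vectors}, the bound
$$
\Pr{ \abs{\norm{Ax}_H - \norm{A}_\HS} \ge t } \le 2 \exp \Big( -\frac{ct^2}{\norm{A}_\op^2} \Big)
$$
turns directly into
$$
\Pr{ \abs{\norm{x}_2 - \sqrt{n}} \ge t } \le 2 \exp(-ct^2),
$$
which is the claimed inequality (the strict inequality $>t$ in the statement follows from the non-strict one $\ge t$ by a trivial limiting argument). The constant $c$ depends only on the subgaussian norms of the coordinates, as inherited from Theorem~\ref{thm: Euclidean concentration}. There is no serious obstacle here: the corollary is purely a one-line consequence of the previously stated Euclidean concentration inequality, and the only content is the bookkeeping of the three norms of $I_n$.
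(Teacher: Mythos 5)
Your proposal is correct and follows the paper's own route exactly: the paper obtains this corollary by applying Theorem~\ref{thm: Euclidean concentration} with $A$ the identity, which is precisely your specialization $\norm{Ax}_H=\norm{x}_2$, $\norm{A}_\HS=\sqrt{n}$, $\norm{A}_\op=1$.
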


\subsection{MGF of a quadratic form}
Another useful fact is the following bound on the moment generating function (MGF)
of a chaos of order $2$. It might be known but is hard to find in the literature
in this generality.

\begin{proposition}[MGF of a subgaussian chaos]			\label{prop: MGF chaos}
  Let $M$ be an $n \times n$ matrix.
  Let $x$ be a random vector in $\R^n$ whose coordinates are 
  independent, mean zero, unit variance, subgaussian random variables.
  Then
  $$
  \E \exp \Big( \l (x^\tran M x - \tr M) \Big)
  \le \exp \left( C \l^2 \norm{M}_\HS^2 \right)
  $$
  for every $\l \in \R$ such that $\abs{\l} \le c/\norm{M}_\op$.
\end{proposition}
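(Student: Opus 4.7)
The plan is to decompose $Y := x^\tran M x - \tr M$ into a diagonal piece and an off-diagonal chaos and handle each separately. Write $Y = D + Q$ with
$$
D = \sum_i M_{ii}(x_i^2 - 1), \qquad Q = \sum_{i \ne j} M_{ij} x_i x_j,
$$
both centered. Cauchy-Schwarz gives $\E e^{\lambda Y} \le (\E e^{2\lambda D})^{1/2}(\E e^{2\lambda Q})^{1/2}$, so it suffices to bound each MGF by $\exp(C\lambda^2 \norm{M}_\HS^2)$ for $|\lambda| \le c/\norm{M}_\op$.

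For the diagonal term, the variables $x_i^2 - 1$ are independent, centered, and subexponential (since $x_i$ is subgaussian). A standard one-variable MGF estimate yields $\E \exp(\mu(x_i^2 - 1)) \le \exp(C\mu^2)$ for $|\mu| \le c$, so by independence $\E e^{\mu D} \le \exp(C\mu^2 \sum_i M_{ii}^2) \le \exp(C\mu^2 \norm{M}_\HS^2)$ whenever $|\mu| \max_i |M_{ii}| \le c$, which is guaranteed by $|\mu| \le c/\norm{M}_\op$ since $\max_i |M_{ii}| \le \norm{M}_\op$. Applied with $\mu = 2\lambda$, this handles $D$.

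For the off-diagonal chaos $Q$ the plan is to decouple. By the de la Pe\~na--Montgomery-Smith decoupling inequality applied to the convex function $z \mapsto e^{\mu z}$,
$$
\E e^{\mu Q} \le \E \exp\bigl( 4\mu \ip{x'}{M x''} \bigr),
$$
where $x', x''$ are independent copies of $x$. Conditionally on $x''$, the right-hand side is the MGF of $\sum_i (Mx'')_i x_i'$, a weighted sum of independent subgaussians, so the standard subgaussian MGF bound gives
$$
\E_{x'} \exp\bigl( 4\mu \ip{x'}{Mx''} \bigr) \le \exp\bigl( C\mu^2 \norm{Mx''}_2^2 \bigr).
$$

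The remaining step, and the main obstacle, is to take the expectation over $x''$ of $\exp(C\mu^2 \norm{Mx''}_2^2)$ without looping back to another quadratic-form MGF. This is where Theorem~\ref{thm: Euclidean concentration} enters: it tells us that $\norm{Mx''}_2 - \norm{M}_\HS$ is subgaussian with parameter $\norm{M}_\op$. Using $\norm{Mx''}_2^2 \le 2\norm{M}_\HS^2 + 2(\norm{Mx''}_2 - \norm{M}_\HS)^2$ and the fact that a subgaussian variable $Z$ of parameter $\sigma$ satisfies $\E \exp(\alpha Z^2) \le \exp(C\alpha \sigma^2)$ for $\alpha \sigma^2 \le c$, we obtain
$$
\E \exp\bigl( C\mu^2 \norm{Mx''}_2^2 \bigr) \le \exp\bigl( C' \mu^2 \norm{M}_\HS^2 \bigr)
$$
provided $\mu^2 \norm{M}_\op^2 \le c$, i.e.\ $|\mu| \le c/\norm{M}_\op$. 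Combining this with the decoupling step and setting $\mu = 2\lambda$ bounds $\E e^{2\lambda Q}$, and together with the diagonal estimate this closes the proof. The one delicate point to verify carefully is the range of $\lambda$: each of the three places where a constraint is imposed (the subexponential MGF for $D$, the decoupling/subgaussian step for $Q$, and the squared-norm exponential moment) yields exactly a constraint of the form $|\lambda| \le c/\norm{M}_\op$, so no extra assumption is lost.
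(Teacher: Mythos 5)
Your decomposition into diagonal and off-diagonal parts, the Cauchy--Schwarz split, and the treatment of the diagonal term via the subexponential MGF of $x_i^2-1$ coincide with the paper's proof. Where you genuinely diverge is the off-diagonal chaos: after decoupling, the paper cites the textbook route (comparison of the decoupled subgaussian chaos to a Gaussian chaos and a direct computation of the Gaussian chaos MGF, \cite[Lemmas~6.2.3 and 6.2.2]{V book}), which yields $\E \exp\big(c\lambda \sum_{i,j} M_{ij} x_i x_j'\big) \le \exp(C\lambda^2 \norm{M}_\HS^2)$ in one stroke. You instead condition on $x''$, use the elementary subgaussian MGF bound for the linear form $\ip{x'}{Mx''}$, and then control $\E_{x''}\exp(C\mu^2\norm{Mx''}_2^2)$ by invoking Theorem~\ref{thm: Euclidean concentration} (subgaussianity of $\norm{Mx''}_2-\norm{M}_\HS$ with parameter $\norm{M}_\op$) together with the standard exponential moment of the square of a subgaussian variable. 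This is correct, and the range bookkeeping ($|\lambda|\le c/\norm{M}_\op$ at each stage, using $\norm{M}_\op\le\norm{M}_\HS$) checks out. What the two routes buy: yours avoids any Gaussian comparison and is modular, but it leans on Theorem~\ref{thm: Euclidean concentration}, which is itself a Hanson--Wright-type consequence quoted from \cite{RV}; since the paper later remarks that Corollary~\ref{cor: MGF Ax} (hence Proposition~\ref{prop: MGF chaos}) \emph{implies} Theorem~\ref{thm: Euclidean concentration}, your argument would make that implication circular as a derivation and keeps the proposition dependent on an external tail-form Hanson--Wright, whereas the paper's proof establishes the MGF bound independently of Theorem~\ref{thm: Euclidean concentration}. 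Within the paper's stated framework (Theorem~\ref{thm: Euclidean concentration} being a known result from the literature), your proof is logically sound.
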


\begin{proof}
{\em Step 1. Separating the diagonal and off-diagonal parts.}
We can break the quadratic form as follows:
$$
S \coloneqq x^\tran M x - \tr M
= \sum_{i=1}^n M_{ii} (x_i^2-1) + \sum_{i,j:\; i \ne j} M_{ij} x_i x_j
\eqqcolon S_{\textrm{diag}} + S_{\textrm{offdiag}}.
$$
By Cauchy-Schwarz inequality, we have
$$
\E \exp(\l S) 
\le \Big[ \E \exp \left( 2 \l S_{\textrm{diag}} \right) \Big]^{1/2}
	\Big[ \E \exp \left( 2 \l S_{\textrm{offdiag}} \right) \Big]^{1/2}.
$$
Let us consider the diagonal and off-diagonal parts separately.

\medskip

{\em Step 2. Diagonal part.}
Since $x_i$ are subgaussian random variables with unit variance, 
$x_i^2-1$ are mean-zero, subexponential random variables, and
$$
\norm[1]{x_i^2 - 1}_{\psi_1}
\lesssim \norm[1]{x_i^2}_{\psi_1}
= \norm[1]{x_i}_{\psi_2}^2
\lesssim 1.
$$
(This is a combination of some basic facts about subgaussian and subexponential
distributions, see \cite[Exercise~2.7.10 and Lemma~2.7.6]{V book}.)
Then a standard bound on the MGF of a mean-zero, subexponential 
distribution (\cite[Property~5 in Proposition~2.7.1]{V book}) gives
\begin{equation}	\label{eq: MGF xi2}
\E \exp \left( \l_i (x_i^2-1) \right) \le \exp(C \l_i^2)
\quad \text{if } \abs{\l_i} \le c.
\end{equation}
Therefore
\begin{align*} 
\E \exp(2 \l S_{\textrm{diag}}) 
  &= \prod_{i=1}^n \E \exp \left( 2 \l M_{ii} (x_i^2-1) \right)
  	\quad \text{(by independence)} \\
  &\le \exp \Big( C \l^2 \sum_{i=1}^n M_{ii}^2 \Big)
  	\quad \text{if } \abs{\l} \le \frac{c}{\abs{M_{ii}}} 
	\quad \text{(by \eqref{eq: MGF xi2})} \\
  &\le \exp \Big( C \l^2 \norm{M}_\HS^2 \Big)
    	\quad \text{if } \abs{\l} \le \frac{c}{\norm{M}_\op}.
\end{align*}

\medskip

{\em Step 2. Off-diagonal part.}
Let $x_1', \ldots, x_n'$ be independent copies of $x_1, \ldots, x_n$. We have 
\begin{align*} 
\E \exp(2 \l S_{\textrm{offdiag}}) 
  &= \E \exp \Big( 2 \l \sum_{i,j:\; i \ne j} M_{ij} x_i x_j \Big) \\
  &\le \E \exp \Big( 8 \l \sum_{i,j=1}^n M_{ij} x_i x_j' \Big) 
  	\quad \text{(by decoupling, see \cite[Remark~6.1.3]{V book})}  \\
  &\le \exp \Big( C \l^2 \norm{M}_\HS^2 \Big)
    	\quad \text{if } \abs{\l} \le \frac{c}{\norm{M}_\op},
\end{align*}
where the last bound follows from \cite[Lemmas~6.2.3 and 6.2.2]{V book}.

Combining the diagonal and off-diagonal contributions, we complete the proof.
\end{proof}

\begin{corollary}		\label{cor: MGF Ax}
  Let $H$ be a Hilbert space and 
  $A: (\R^n, \norm{\cdot}_2) \to H$ be a linear operator. 
  Let $x$ be a random vector in $\R^n$ whose coordinates are 
  independent, mean zero, unit variance, subgaussian random variables.
  Then 
  $$
  \E \exp \Big( \l \big( \norm{Ax}_H^2 - \norm{A}_\HS^2 \big) \Big) 
  \le \exp \left( C \l^2 \norm{A}_\op^2 \norm{A}_\HS^2 \right)
  $$
  for every $\l \in \R$ such that $\abs{\l} \le c/\norm{A}_\op^2$.
\end{corollary}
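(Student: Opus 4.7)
The plan is to reduce the statement directly to Proposition~\ref{prop: MGF chaos} by recognizing $\norm{Ax}_H^2$ as a quadratic form in $x$. Specifically, I would set $M \coloneqq A^*A$, regarded as a symmetric positive semidefinite operator on $\R^n$ (where $A^* : H \to \R^n$ is the adjoint). Then $\norm{Ax}_H^2 = \ip{Ax}{Ax}_H = \ip{A^*Ax}{x} = x^\tran M x$, so the random variable whose MGF we must control is exactly $x^\tran M x - \tr M$.

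Next I would translate the three parameters that appear in Proposition~\ref{prop: MGF chaos} into the corresponding quantities for $A$. The trace identity $\tr(A^*A) = \norm{A}_\HS^2$ identifies the centering. The operator-norm identity $\norm{A^*A}_\op = \norm{A}_\op^2$ gives $\norm{M}_\op = \norm{A}_\op^2$, so the admissible range $\abs{\l} \le c/\norm{M}_\op$ in the proposition becomes exactly the range $\abs{\l} \le c/\norm{A}_\op^2$ in the corollary. Finally, for the Hilbert--Schmidt norm of $M$, I would use the elementary inequality
$$
\norm{M}_\HS^2 = \tr\bigl((A^*A)^2\bigr) \le \norm{A^*A}_\op \cdot \tr(A^*A) = \norm{A}_\op^2 \, \norm{A}_\HS^2,
$$
valid because $A^*A$ is positive semidefinite (so its eigenvalues $\l_i \ge 0$ satisfy $\sum \l_i^2 \le \max_j \l_j \cdot \sum \l_i$).

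Plugging these into the conclusion of Proposition~\ref{prop: MGF chaos} yields
$$
\E \exp\bigl(\l(x^\tran M x - \tr M)\bigr) \le \exp\bigl(C \l^2 \norm{M}_\HS^2\bigr) \le \exp\bigl(C \l^2 \norm{A}_\op^2 \norm{A}_\HS^2\bigr)
$$
in the stated range of $\l$, which is the corollary. There is no real obstacle here: the only thing one must be mildly careful about is that $H$ may be infinite-dimensional, but since $x \in \R^n$ the operator $A^*A$ still acts on the finite-dimensional space $\R^n$ and all quantities ($\tr M$, $\norm{M}_\op$, $\norm{M}_\HS$) are finite and coincide with their $A$-counterparts as written. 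Thus the proof is essentially a one-line reduction plus the standard trace inequality above.
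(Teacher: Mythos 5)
Your proposal is correct and matches the paper's own proof exactly: the paper applies Proposition~\ref{prop: MGF chaos} with $M = A^*A$ and uses the same identities $x^\tran M x = \norm{Ax}_H^2$, $\tr M = \norm{A}_\HS^2$, $\norm{M}_\op = \norm{A}_\op^2$, and $\norm{M}_\HS \le \norm{A}_\op \norm{A}_\HS$. Your eigenvalue justification of the last inequality is simply a spelled-out version of what the paper leaves implicit.
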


\begin{proof}
Apply Proposition~\ref{prop: MGF chaos} for $M \coloneqq A^*A$ 
and note that 
$$
x^\tran M x = \norm{Ax}_H^2, \quad
\tr M = \norm{A}_\HS^2, \quad
\norm{M}_\op = \norm{A}_\op^2, \quad
\norm{M}_\HS \le \norm{A}_\op \norm{A}_\HS. \qedhere
$$
\end{proof}

Note in passing that Corollary~\ref{cor: MGF Ax} implies Euclidean concentration Theorem~\ref{thm: Euclidean concentration}. All one needs to do is use exponential Markov's inequality and
optimize the resulting bound in $\l$. We leave this as an exercise.

\subsection{Euclidean functions}			\label{s: Euclidean functions}

Theorems~\ref{thm: Euclidean concentration}
and \ref{thm: Euclidean concentration tensor} can be conveniently stated as 
results about concentration of Euclidean functions. 

\begin{definition}[Euclidean functions]
  A function $f: \R^n \to [0,\infty)$ is called a {\em Euclidean} function on $\R^n$
  if it can be expressed as 
  $$
  f(x) = \norm{Ax}_H
  $$
  where $H$ is a Hilbert space and $A: \R^n \to H$ is a linear operator. 
  Equivalently, $f$ is Euclidean if $f^2$ is a positive-semidefinite quadratic form, 
  i.e. if
  $$
  f(x)^2 = x^\tran M x
  $$
  for some $n \times n$ positive-semidefinite matrix $M$. 
\end{definition} 

Let us note a few obvious facts about Euclidean functions:

\begin{lemma}[Properties of Euclidean functions]
  \quad
  \begin{enumerate}[(i)]
    \item If $f$ is a Euclidean function on $\R^n$ then $af$ is, for any $a \ge 0$.
    \item If $f$ and $g$ are Euclidean functions on $\R^n$ then $\sqrt{f^2 + g^2}$ is.
    \item If $f$ is a random Euclidean function on $\R^n$ then $(\E f^2)^{1/2}$ is.
    \item The Lipschitz norm of a Euclidean function can be computed as follows:
      $$
      \norm{f}_\Lip = \max_{x \in \R^n:\; \norm{x}_2 = 1} f(x).
      $$
      In particular, if $f(x) = \norm{Ax}_H$ then $\norm{f}_\Lip = \norm{A}_\op$.
  \end{enumerate}
\end{lemma}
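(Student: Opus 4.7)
The plan is to verify each of the four items directly from the definition, using whichever of the two equivalent formulations---$f(x) = \norm{Ax}_H$ for a linear operator $A: \R^n \to H$, or $f(x)^2 = x^\tran M x$ for a PSD matrix $M$---is most convenient. The equivalence between them is standard linear algebra: any PSD matrix $M$ admits a factorization $M = A^\tran A$ via its positive square root $A = M^{1/2}$, and conversely $A^\tran A$ is PSD for any $A$. I will move freely between these two forms.

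For (i), given $f(x) = \norm{Ax}_H$ and $a \ge 0$, the operator $aA$ immediately witnesses $af$ as a Euclidean function. For (ii), I represent $f$ and $g$ via operators $A: \R^n \to H_1$ and $B: \R^n \to H_2$, form the orthogonal direct sum $H_1 \oplus H_2$, and define $Cx \coloneqq (Ax, Bx)$; then $\norm{Cx}^2 = \norm{Ax}_{H_1}^2 + \norm{Bx}_{H_2}^2 = f(x)^2 + g(x)^2$, so $\sqrt{f^2+g^2} = \norm{Cx}_{H_1 \oplus H_2}$ is Euclidean. For (iii), the matrix form is cleaner: writing $f(x)^2 = x^\tran M(\omega) x$ with $M(\omega)$ random PSD, I interchange expectation with the bilinear form (which is deterministic in $x$) to obtain $\E f(x)^2 = x^\tran (\E M) x$. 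The cone of PSD matrices is closed under expectations (being a closed convex cone), so $\E M$ is PSD, and hence $(\E f^2)^{1/2}$ is Euclidean with witness matrix $\E M$.

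For (iv), I will exploit the fact that $f(x) = \norm{Ax}_H$ is a seminorm on $\R^n$: it is positively homogeneous and satisfies the reverse triangle inequality $|f(x) - f(y)| \le f(x-y) \le \norm{A}_\op \norm{x-y}_2$, which gives $\norm{f}_\Lip \le \norm{A}_\op = \max_{\norm{x}_2 = 1} f(x)$. Conversely, applying the Lipschitz bound between an arbitrary unit vector $x$ and the origin, together with $f(0) = 0$, yields $f(x) = |f(x) - f(0)| \le \norm{f}_\Lip$, so $\max_{\norm{x}_2 = 1} f(x) \le \norm{f}_\Lip$. Combining the two inequalities gives equality.

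There is no serious obstacle anywhere in the argument; each item reduces to a one- or two-line verification. The only technical point worth flagging is the standard PSD square-root factorization underlying the equivalence of the two definitions of a Euclidean function, and the fact that the PSD cone is closed under expectations in (iii).
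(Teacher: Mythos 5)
Your proposal is correct. The paper itself gives no proof of this lemma---it states the four items as ``obvious facts'' and leaves them to the reader---so there is no argument in the paper to compare against. Your verification is the natural one the author presumably had in mind: (i) rescale the operator; (ii) stack the operators into an orthogonal direct sum of Hilbert spaces; (iii) pass to the quadratic-form picture and use that for fixed $x$ the map $M \mapsto x^\tran M x$ is linear, together with the fact that $v^\tran (\E M) v = \E[v^\tran M v] \ge 0$ for every $v$; (iv) use homogeneity and subadditivity of the seminorm $\norm{Ax}_H$ in one direction and the Lipschitz bound from the origin in the other. All steps are sound. One tiny point worth making explicit in (iii) is that one tacitly assumes $\E M$ exists (equivalently, $\E f(x)^2 < \infty$ for all $x$), which is implicit in the lemma's phrasing and always satisfied in the paper's applications; and in (iv) the passage from $\sup$ to $\max$ on the unit sphere uses compactness and continuity of $f$, which holds since $\R^n$ is finite-dimensional.
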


For future convenience, we restate 
Corollary~\ref{cor: MGF Ax} in terms of Euclidean functions:

\begin{corollary}[MGF of a Euclidean function]		\label{cor: MGF Euclidean}
  Let $f : (\R^n, \norm{\cdot}_2) \to [0,\infty)$ be a Euclidean function.  
  Let $x$ be a random vector in $\R^n$ whose coordinates are 
  independent, mean zero, unit variance, subgaussian random variables.
  Then 
  $$
  \E \exp \Big( \l \big( f(x)^2 - \E f(x)^2 \big) \Big) 
  \le \exp \left( C \l^2 \norm{f}_\Lip^2 \E f(x)^2 \right)
  $$
  for every $\l \in \R$ such that $\abs{\l} \le c/\norm{f}_\Lip^2$.
\end{corollary}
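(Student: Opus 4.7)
The plan is to observe that the statement is essentially a repackaging of Corollary~\ref{cor: MGF Ax}, obtained by translating its three ingredients (the quantity $\norm{Ax}_H^2$, its expectation $\norm{A}_\HS^2$, and the operator norm $\norm{A}_\op$) into the language of Euclidean functions via the properties listed in the preceding lemma.

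First, I would invoke the definition of a Euclidean function to write $f(x) = \norm{Ax}_H$ for some Hilbert space $H$ and linear operator $A:\R^n\to H$. This immediately gives $f(x)^2 = \norm{Ax}_H^2$, and by part (iv) of the lemma it gives $\norm{f}_\Lip = \norm{A}_\op$. Next I would compute $\E f(x)^2$. Since the coordinates of $x$ are independent with mean zero and unit variance, $\E[xx^\tran] = I_n$, so
$$
\E f(x)^2 = \E \langle Ax, Ax\rangle_H = \E \tr(A^*A\, xx^\tran) = \tr(A^*A) = \norm{A}_\HS^2.
$$
At this point all three quantities in Corollary~\ref{cor: MGF Ax} have been identified with the corresponding quantities in the statement to be proved.

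Finally, I would simply plug these identifications into Corollary~\ref{cor: MGF Ax}: for $\abs{\l} \le c/\norm{A}_\op^2 = c/\norm{f}_\Lip^2$,
$$
\E \exp\Big( \l \big( f(x)^2 - \E f(x)^2 \big) \Big)
= \E \exp\Big( \l \big( \norm{Ax}_H^2 - \norm{A}_\HS^2 \big) \Big)
\le \exp\!\left( C\l^2 \norm{A}_\op^2 \norm{A}_\HS^2 \right)
= \exp\!\left( C\l^2 \norm{f}_\Lip^2\, \E f(x)^2 \right),
$$
which is the desired bound.

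There is no real obstacle here; the only thing to be careful about is that the identities $\norm{f}_\Lip = \norm{A}_\op$ and $\E f(x)^2 = \norm{A}_\HS^2$ both rely on the standing assumptions (the mean-zero/unit-variance coordinates and the definition of the Lipschitz norm via supremum on the unit sphere for a positive-homogeneous Euclidean function), so one should state them explicitly to make the translation unambiguous.
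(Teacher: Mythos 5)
Your proof is correct and matches the paper's intent: the paper simply declares Corollary~\ref{cor: MGF Euclidean} a restatement of Corollary~\ref{cor: MGF Ax} in the language of Euclidean functions, and the identifications you make ($f(x)^2=\norm{Ax}_H^2$, $\norm{f}_\Lip=\norm{A}_\op$, and $\E f(x)^2=\norm{A}_\HS^2$ via $\E[xx^\tran]=I_n$) are exactly the translation being invoked.
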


\section{A maximal inequality}			\label{s: maximal inequality}

The proof of both of our main results, 
Theorems~\ref{thm: convex concentration tensor} and \ref{thm: Euclidean concentration tensor}, 
relies on a tight control of the norm of the simple random tensor
\begin{equation}	\label{eq: X norm}
\norm{X}_2 = \norm[1]{x_1 \otimes \cdots \otimes x_d}_2
= \prod_{i=1}^d \norm[1]{x_i}_2.
\end{equation}

\begin{lemma}[The norm of a random tensor]		\label{lem: prod}
  Let $x_1, \ldots, x_d \in \R^n$ be independent random vectors
  with independent, mean zero, unit variance, subgaussian coordinates. 
  Then, for every $0 \le t \le 2n^{d/2}$, we have
  $$
  \Pr{ \prod_{i=1}^d \norm[1]{x_i}_2 > n^{d/2} + t}
  \le 2 \exp \Big( - \frac{c t^2}{d n^{d-1}} \Big).
  $$
\end{lemma}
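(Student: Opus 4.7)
The plan is to linearize the product via logarithms, reducing the tail bound on a product of concentrated factors to a concentration bound for a sum of independent subgaussians. Write $Z_i := \norm{x_i}_2 - \sqrt n$, so that
\[
\prod_{i=1}^d \norm{x_i}_2 = n^{d/2} \prod_{i=1}^d \Big(1 + \frac{Z_i}{\sqrt n}\Big).
\]
Corollary~\ref{cor: conc norm} tells us that $Z_1,\ldots,Z_d$ are independent subgaussian random variables whose $\psi_2$-norms are bounded by an absolute constant. Jensen's inequality also gives $\E Z_i \le 0$, since $\E \norm{x_i}_2 \le (\E \norm{x_i}_2^2)^{1/2} = \sqrt n$.

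On the event $\{\prod_i \norm{x_i}_2 > n^{d/2}+t\}$ the product is strictly positive, so each factor $1+Z_i/\sqrt n$ is positive and we may take logarithms. Using $\log(1+x) \le x$, the event is contained in
\[
\sum_{i=1}^d \frac{Z_i}{\sqrt n} > \log(1+u), \qquad u := \frac{t}{n^{d/2}} \in [0,2].
\]
A short calculus check shows $\log(1+u) \ge u/2$ for all $u \in [0,2]$ (the difference vanishes at $u=0$, is concave with a unique interior maximum, and is still nonnegative at $u=2$ because $\log 3 > 1$). This is the step that forces the range $t \le 2n^{d/2}$ appearing in the statement. Hence the event is contained in $\{\sum_i Z_i > t/(2n^{(d-1)/2})\}$.

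Finally I would apply a standard subgaussian tail bound. Because $\E Z_i \le 0$, centering only strengthens the event:
\[
\Pr{\sum_{i=1}^d Z_i > s} \le \Pr{\sum_{i=1}^d (Z_i-\E Z_i) > s} \le 2\exp\Big(-\frac{cs^2}{d}\Big),
\]
the last inequality being the usual Hoeffding-type bound for a sum of independent mean-zero subgaussians with $\psi_2$-norms $O(1)$, so that the sum has $\psi_2$-norm $O(\sqrt d)$. Plugging in $s = t/(2n^{(d-1)/2})$ produces the advertised tail $2\exp(-ct^2/(dn^{d-1}))$.

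The argument is essentially a clean reduction: the only non-mechanical step is the passage $\log(1+u) \ge u/2$ on $[0,2]$, which is what dictates the stated regime of $t$. Two minor bookkeeping items are (a) checking that one can legitimately take logs on the event in question, which follows from the product being strictly positive there, and (b) handling the nonzero mean of $Z_i$, which is harmless because its sign is favorable (Jensen).
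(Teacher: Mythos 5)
Your proof is correct and takes essentially the same route as the paper: both arguments reduce the event to $\sum_{i=1}^d\big(\norm{x_i}_2-\sqrt n\big) > t/(2n^{(d-1)/2})$ (handling the non-centered mean via $\E\norm{x_i}_2\le\sqrt n$) and then apply subgaussian concentration of that sum using Corollary~\ref{cor: conc norm}. The only difference is cosmetic: you linearize the product with $\log(1+x)\le x$ and $\log(1+u)\ge u/2$ on $[0,2]$, whereas the paper uses AM--GM together with $\big(1+\tfrac{u}{2d}\big)^d\le 1+u$, which yields the identical reduction.
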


Note in passing that this result is a partial case of 
Theorem~\ref{thm: convex concentration tensor}
for the function $f(X) = \norm{X}_2$ 
and of Theorem~\ref{thm: Euclidean concentration tensor}
for the identity map $A$.

\begin{proof}
Let $s \ge 0$. Then
\begin{equation}	\label{eq: prob prod}
\Pr{ \prod_{i=1}^d \norm[1]{x_i}_2  > (\sqrt{n} + s)^d }
\le \Pr{ \frac{1}{d} \sum_{i=1}^d \big( \norm[1]{x_i}_2 - \sqrt{n} \big) > s }.
\end{equation}
To check this, take $d$-th root on both sides of the inequality
$\prod_{i=1}^d \norm[1]{x_i}_2  > (\sqrt{n} + s)^d$, apply the inequality of 
arithmetic and geometric means, and subtract $\sqrt{n}$ from both parts.
Furthermore, we can replace all terms $\sqrt{n}$ in \eqref{eq: prob prod}
by the means $\mu_i \coloneqq \E \norm[1]{x_i}_2$
since they satisfy
$$
\mu_i \le \Big( \E \norm[1]{x_i}_2^2 \Big)^{1/2} = \sqrt{n}.
$$
Thus the probability in \eqref{eq: prob prod} is bounded by
\begin{equation}	\label{eq: prob prod revised}
\Pr{ \frac{1}{d} \sum_{i=1}^d \big( \norm[1]{x_i}_2 - \mu_i \big) > s },
\end{equation}
which is a tail probability for a sum of independent, mean zero random variables.

By the concentration of the norm (Corollary~\ref{cor: conc norm})
and a standard centering argument (\cite[Lemma~2.6.8]{V book}),
we have\footnote{Here $\norm{\cdot}_{\psi_2}$ and $\norm{\cdot}_{\psi_1}$ denote
  the subgaussian and subexponential norms, respectively; see \cite[Sections~2.5, 2.7]{V book}
  for definition and basic properties.}
$$
\norm{ \norm[1]{x_i}_2 - \mu_i }_{\psi_2} \le C, 
\quad i=1,\ldots,d.
$$ 
and this implies that
$$
\norm[3]{ \frac{1}{d} \sum_{i=1}^d \big( \norm[1]{x_i}_2 - \mu_i \big)}_{\psi_2} 
\le \frac{C}{\sqrt{d}},
$$
see \cite[Proposition~2.6.1]{V book}.
Thus the probability in \eqref{eq: prob prod revised} is bounded by 
$$
2\exp(-c s^2 d).
$$

Let $0 \le u \le 2$ and apply this bound for $s \coloneqq u \sqrt{n}/(2d)$.
With this choice, 
$$
(\sqrt{n} + s)^d = n^{d/2} \Big( 1 + \frac{u}{2d} \Big)^d 
\le n^{d/2} (1+u).
$$
Thus we have shown that 
\begin{equation}	\label{eq: conc prod u}
\Pr{ \prod_{i=1}^d \norm[1]{x_i}_2 > n^{d/2} (1+u) }
\le 2\exp \Big( -\frac{c n u^2}{4d} \Big).
\end{equation}
Using this inequality for $u \coloneqq t/n^{d/2}$, we complete the proof.
\end{proof}

A stronger statement will be needed in the proof of our main results: 
we will require a tight control of the products \eqref{eq: conc prod u} {\em for all $d$
simultaneously}. The following maximal inequality will be used for that: 

\begin{lemma}[A maximal inequality]			\label{lem: maximal inequality}
  Let $x_1, \ldots, x_d \in \R^n$ be independent random vectors
  with independent, mean zero, unit variance, subgaussian coordinates. 
  Then, for every $0 \le u \le 2$, we have
  $$
  \Pr{ \max_{1 \le k \le d} n^{-k/2} \prod_{i=1}^k \norm[1]{x_i}_2  > 1+u}
  \le 2 \exp \Big( - \frac{c n u^2}{d} \Big).
  $$
\end{lemma}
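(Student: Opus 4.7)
The plan is to reduce the maximum of products to the maximum of a random walk via the AM-GM inequality, and then apply Doob's maximal inequality for a submartingale.

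First I would use AM-GM to convert products to sums. If $\prod_{i=1}^k \norm{x_i}_2 > n^{k/2}(1+u)$ for some $k$, then taking $k$-th roots and using AM-GM gives
$$
\frac{1}{k} \sum_{i=1}^k \norm{x_i}_2 > \sqrt{n}\,(1+u)^{1/k}.
$$
Since $e^{t} \ge 1+t$, one has $(1+u)^{1/k} \ge 1 + \log(1+u)/k$, and therefore
$$
\sum_{i=1}^k \big(\norm{x_i}_2 - \sqrt{n}\big) > \sqrt{n}\,\log(1+u).
$$
Using $\mu_i \coloneqq \E \norm{x_i}_2 \le \sqrt{n}$ (already noted in the proof of Lemma~\ref{lem: prod}) and the elementary inequality $\log(1+u) \ge u/3$ valid for $u \in [0,2]$, this yields
$$
V_k \coloneqq \sum_{i=1}^k \big(\norm{x_i}_2 - \mu_i\big) > c\, u \sqrt{n}.
$$
Consequently,
$$
\Pr{\max_{1 \le k \le d} n^{-k/2} \prod_{i=1}^k \norm{x_i}_2 > 1+u}
\le \Pr{\max_{1 \le k \le d} V_k > c\, u\sqrt{n}}.
$$

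Next I would invoke a martingale tail bound on the right-hand side. As observed in the proof of Lemma~\ref{lem: prod}, the terms $\norm{x_i}_2 - \mu_i$ are independent, mean-zero random variables with uniformly bounded subgaussian norm (combining Corollary~\ref{cor: conc norm} with the standard centering lemma \cite[Lemma~2.6.8]{V book}). Hence $\E \exp(\theta(\norm{x_i}_2 - \mu_i)) \le \exp(C\theta^2)$ for every $\theta \in \R$, so $e^{\theta V_k}$ is a nonnegative submartingale with $\E e^{\theta V_d} \le \exp(C\theta^2 d)$. By Doob's maximal inequality applied to this submartingale,
$$
\Pr{\max_{1 \le k \le d} V_k \ge s}
\le e^{-\theta s}\, \E e^{\theta V_d}
\le \exp\!\big( -\theta s + C \theta^2 d \big).
$$
Optimizing in $\theta > 0$ gives $\Pr{\max_{1 \le k \le d} V_k \ge s} \le \exp(-c s^2/d)$. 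Substituting $s = c u \sqrt{n}$ yields the desired bound $2\exp(-c'n u^2/d)$.

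Conceptually this is essentially a maximal version of the single-scale estimate \eqref{eq: conc prod u} from Lemma~\ref{lem: prod}: the AM-GM step is the same one used there, but applied simultaneously at every $k$, while the leap from single-$k$ subgaussian concentration to a maximum-over-$k$ bound is absorbed for free by Doob. The main potential obstacle is that a naive union bound over $k = 1,\dots,d$ would cost a factor of $d$ inside the exponent, which is precisely what we want to avoid; replacing this by Doob's inequality, which costs only a constant, is the crucial move and the whole point of the lemma.
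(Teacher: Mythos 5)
Your proof is correct, and it takes a genuinely different route from the paper's. The paper proves the maximal inequality by a dyadic chaining argument: it partitions each interval $[1,k]$ into at most one binary interval from each scale $\ell = 0,\dots,L$, applies Lemma~\ref{lem: prod} to each binary block with a scale-dependent tolerance $2^{-\ell/4}u$, and takes a union bound over blocks and scales; the union bound converges because the exponent $2^{\ell/2}\cdot cnu^2/d$ grows geometrically in $\ell$. Your argument instead applies the AM-GM reduction directly to every prefix $[1,k]$, converting the event $\prod_{i=1}^k\norm{x_i}_2 > n^{k/2}(1+u)$ into the event $V_k = \sum_{i\le k}(\norm{x_i}_2 - \mu_i) > cu\sqrt{n}$ (using $(1+u)^{1/k}\ge 1+\log(1+u)/k$ and $\log(1+u)\ge u/3$ on $[0,2]$), and then gets the maximum over $k$ for free via Doob's maximal inequality applied to the nonnegative submartingale $e^{\theta V_k}$. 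The key observation that makes this work is that your lower bound $cu\sqrt{n}$ for $V_k$ is uniform in $k$ — the factor $k$ cancels — so a single exponential Chernoff--Doob step with $\E e^{\theta V_d}\le\exp(C\theta^2 d)$ suffices. Conceptually, both methods overcome the same obstacle (a naive union bound over $k$ would cost a $\log d$ or $d$ factor), but yours does it with a single off-the-shelf maximal inequality rather than a hand-built multiscale decomposition; the result and its dependence on $n$, $d$, $u$ are identical. One cosmetic point: you implicitly need the submartingale property $\E[e^{\theta V_{k+1}}\mid\FF_k]\ge e^{\theta V_k}$, which holds by Jensen since the increments are mean zero; it is worth stating that explicitly in a final write-up.
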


\begin{proof}
{\em Step 1. A binary partition.}
By increasing $d$ if necessary, we can assume that 
$$
d = 2^L \quad \text{for some } L \in \N.
$$

For each level $\ell \in \{0,1,\ldots,L\}$, consider the partition $\II_\ell$ of 
the integer interval $[1,d] = \{1,\ldots,d\}$ into $2^\ell$ successive intervals of length
$$
d_\ell \coloneqq \frac{d}{2^\ell}.
$$
We call each of these intervals a {\em binary interval}. For example,  
the family $\II_0$ consists of just one binary interval $[1,d]$, 
and the family $\II_1$ consists of two binary intervals $[1,d/2]$ and $[d/2+1,d]$. 

For every integer $k \in [1,d]$, the interval $[1,k]$ can be partitioned into binary intervals
of different lengths. (The binary representation of the number $k/d$ determines which intervals
participate in this partition.) As a consequence, such partition of $[1,k]$ must include no more than 
one interval from each family $\II_\ell$.  

\medskip

{\em Step 2. Controlling the product over binary sets.}
Fix a level $\ell \in \{0,1,\ldots,L\}$ and a binary interval $I \in \II_\ell$.
Apply Lemma~\ref{lem: prod} with $d$ replaced by $|I|=d_\ell = d/2^\ell$ and 
for $t \coloneqq 2^{-\ell/4} n^{d_\ell/2} u$. It gives
$$
\Pr{ \prod_{i \in I} \norm[1]{x_i}_2 > (1 + 2^{-\ell/4} u) n^{d_\ell/2} }
\le 2 \exp \Big( - \frac{c_0 n u^2}{2^{\ell/2} d_\ell} \Big)
= 2 \exp \Big( - 2^{\ell/2} \cdot \frac{c_0 n u^2}{d} \Big).
$$

Taking a union bound over all levels $\ell$ and all $2^\ell$ binary intervals $I$ in 
the family $\II_\ell$, we get
\begin{multline*}
\Pr{ \exists \ell \in \{0,\ldots,L\}, \exists I \in \II_\ell :\;   
\prod_{i \in I} \norm[1]{x_i}_2 > (1 + 2^{-\ell/4} u) n^{d_\ell/2} } \\
\le \sum_{\ell=0}^L 2^\ell \cdot 2\exp \Big( - 2^{\ell/2} \cdot \frac{c_0 n u^2}{d} \Big).
\end{multline*}

To simplify this bound, we can assume that $c_0 n u^2/d \ge 1$, otherwise the
probability bound in the conclusion of the lemma becomes trivial if $c<c_0/2$.
Also, $2^{\ell/2} \ge 1$, and thus 
$$
2^{\ell/2} \cdot \frac{c_0 n u^2}{d} \ge \frac{1}{2} \Big( 2^{\ell/2} + \frac{c_0 n u^2}{d} \Big).
$$
Substituting this into our probability bound, we can continue it as
$$
2\exp \Big( - \frac{c_0 n u^2}{2d} \Big).
\sum_{\ell=0}^L 2^\ell \cdot 2\exp \Big( - 2^{\ell/2-1} \Big)
\le C \exp \Big( - \frac{c_0 n u^2}{2d} \Big).
$$
By reducing the absolute constant $c_0$, we can make $C = 2$. 

\medskip

{\em Step 3. Controlling the product over any interval.}
Let us fix a realization of random vectors for which the good event considered above occurs, i.e.
\begin{equation}	\label{eq: prod bin int}
\prod_{i \in I} \norm[1]{x_i}_2 \le (1 + 2^{-\ell/4} u) n^{d_\ell/2}
\quad \text{for every } \ell \in \{0,\ldots,L\} \text{ and } I \in \II_\ell.
\end{equation}
Let $1 \le k \le d$. As we noted in Step~1, we can partition the interval $[1,k]$
into binary intervals $I \in \II_\ell$ so that at most one binary interval is taken from each family $\II_\ell$.
Let us multiply the inequalities \eqref{eq: prod bin int} for all binary intervals $I$ that participate in 
this partition. Note that sum of exponents $d_\ell$ is the sum of the length of these intervals $I$, 
which equals $k$. Thus we obbtain
\begin{align*} 
\prod_{i=1}^k \norm[1]{x_i}_2 
  &\le n^{k/2} \prod_{\ell=0}^L (1 + 2^{-\ell/4} u) 
  \le n^{k/2} \exp \Big( u \sum_{\ell=0}^L 2^{-\ell/4} \Big)
  		\quad \text{(using $1+x \le e^x$)}\\
  &\le n^{k/2} \exp(Cu) 
  \le n^{k/2} (1+e^{2C} u) 
  		\quad \text{(since $0 \le u \le 2$)}.
\end{align*}

This yields the conclusion of the lemma with $Cu$ instead of $u$ in the bound. 
One can get rid of $C$ by reducing the constant $c$ in the probability bound.
The proof is complete.
\end{proof}

\section{Proof of Theorem~\ref{thm: convex concentration tensor}}		\label{s: Talagrand tensor}

{\em Step 1. Applying the maximal inequality.}
We can assume without loss of generality that $\norm{f}_\Lip = 1$.
Consider the events 
$$
\EE_k \coloneqq \left\{ \prod_{i=k}^d \norm[1]{x_i}_2 \le 2n^{(d-k+1)/2} \right\}, 
\quad k=1, \ldots, d,
$$
and let $\EE_{d+1}$ be the entire probability space for convenience.
Applying the maximal inequality of Lemma~\ref{lem: maximal inequality} for $u=1$
and for the reverse ordering of the vectors, we see that the event
$$
\EE \coloneqq \EE_2 \cap \cdots \cap \EE_d
$$
is likely:
\begin{equation}	\label{eq: EE prob}
\P(\EE) \ge 1 - 2 \exp \Big( - \frac{cn}{d} \Big).
\end{equation}

\medskip

{\em Step 2. Applying the convex concentration inequality.}
Fix any realization of the random vectors $x_2, \ldots, x_d$ that satisfy $\EE_2$
and apply the Convex Concentration Theorem~\ref{thm: convex concentration} for 
$f$ as a function of $x_1$. It is a convex and Lipschitz function. 
To get a quantitative bound on its Lipschitz norm, consider any $x, y \in \R^n$ and note that
\begin{align*} 
  &\abs{ f(x \otimes x_2 \otimes \cdots \otimes x_d) 
	- f(y \otimes x_2 \otimes \cdots \otimes x_d)} \\
  &\le \norm{ (x-y) \otimes x_2 \otimes \cdots \otimes x_d }_2
	\quad \text{(since $\norm{f}_\Lip = 1$)} \\
  &= \norm{x-y}_2 \cdot \prod_{i=2}^d \norm[1]{x_i}_2
  \le \norm{x-y}_2 \cdot 2n^{(d-1)/2}
   	\quad \text{(since $\EE_2$ holds)}.
\end{align*}
This shows that $f$ as a function of $x_1$ has Lipschitz norm bounded by 
\begin{equation}	\label{eq: L}
L \coloneqq 2n^{(d-1)/2}.
\end{equation}
The Convex Concentration Theorem~\ref{thm: convex concentration} then yields
$$
\norm{f - \E_{x_1} f}_{\psi_2(x_1)} \le CL
\quad \text{for any } x_2, \ldots, x_d \text{ that satisfy } \EE_2.
$$
In this inequality, $x_1$ indicates that the expectation and the $\psi_2$ norm 
is taken with respect to the random vector $x_1$, i.e. 
conditioned on all other random vectors.

Fix any realization of the random vectors $x_3, \ldots, x_d$ that satisfy $\EE_3$
and apply the Convex Concentration Theorem~\ref{thm: convex concentration} for 
$\E_{x_1} f$ as a function of $x_2$. It is a convex and Lipschitz function. 
To get a quantitative bound on its Lipschitz norm, consider any $x, y \in \R^n$ and note that
\begin{align*} 
  &\abs{ \E_{x_1} f(x_1 \otimes x \otimes x_3 \otimes \cdots \otimes x_d) 
	- \E_{x_1} f(x_1 \otimes y \otimes x_3 \otimes \cdots \otimes x_d) } \\
  &\le \E_{x_1} \norm{ x_1 \otimes (x-y) \otimes x_3 \otimes \cdots \otimes x_d }_2
	\quad \text{(by Jensen's inequality and since $\norm{f}_\Lip = 1$)} \\
  &\le \Big( \E_{x_1} \norm[1]{x_1}_2^2 \Big)^{1/2} \cdot \norm{x-y}_2 \cdot \prod_{i=3}^d \norm[1]{x_i}_2 \\
  &\le \sqrt{n} \cdot \norm{x-y}_2 \cdot 2n^{(d-2)/2}
   	\quad \text{(since $\EE_3$ holds)} \\
  &= \norm{x-y}_2 \cdot 2n^{(d-1)/2}.
\end{align*}
This shows that $\E_{x_1} f$ as a function of $x_2$ has Lipschitz norm bounded by 
$L = 2n^{(d-1)/2}$. 
The Convex Concentration Theorem~\ref{thm: convex concentration} then yields
$$
\norm{\E_{x_1} f - \E_{x_1, x_2} f}_{\psi_2(x_2)} \le CL
\quad \text{for any } x_3, \ldots, x_d \text{ that satisfy } \EE_3.
$$

Continuing in a similar way, we can show that for every $k = 1,\ldots,d$: 
\begin{equation}	\label{eq: differences subgaussian}
\norm{\E_{x_1, \ldots, x_{k-1}} f - \E_{x_1, \ldots, x_k} f}_{\psi_2(x_k)} \le CL
\quad \text{for any } x_{k+1}, \ldots, x_d \text{ that satisfy } \EE_{k+1}.
\end{equation}

\medskip

{\em Step 3. Combining the increments using a martingale-like argument.}
Let us look at the differences
$$
\Delta_k = \Delta_k \big( x_k, \ldots, x_d \big) 
\coloneqq \E_{x_1, \ldots, x_{k-1}} f - \E_{x_1, \ldots, x_k} f.
$$
The estimate \eqref{eq: differences subgaussian} on the subgaussian norm 
yields the following bound on the moment generating function
\cite[Proposition~2.5.2]{V book}:
$$
\E_{x_k} \exp(\l \Delta_k) \le \exp(C L^2 \l^2)
\quad \text{for any } x_{k+1}, \ldots, x_d \text{ that satisfy } \EE_{k+1}
$$
and for any $\l \in \R$. 
We can combine these pieces using a martingale-like argument, which we defer
to Lemma~\ref{lem: martingale}.
It gives that for any $\l \in \R$, 
\begin{equation}	\label{eq: MGF increment}
\E \exp \left( \l (f-\E f) \right) \one_\EE 
= \E \exp \left( \l (\Delta_1+\cdots+\Delta_d) \right) \one_\EE 
\le \exp(C d L^2 \l^2)
\end{equation}
where $\EE = \EE_2 \cap \cdots \cap \EE_d$ is the event whose 
probability we estimated in \eqref{eq: EE prob}.

\medskip

{\em Step 4. Deriving the concentration via exponential Markov's inequality.}
To derive a probability bound, we can use a standard argument 
based on exponential Markov's inequality. Namely, we have for every $\l>0$: 
\begin{align*} 
\Pr{f-\E f > t}
  &\le \Pr{f-\E f > t \text{ and } \EE} + \P(\EE^c) \\
  &= \Pr{\exp(\l(f-\E f)) \one_\EE > \exp(\l t)} + \P(\EE^c) \\
  &\le \exp(-\l t) \, \E \exp \left( \l (f-\E f) \right) \one_\EE + \P(\EE^c) 
  	\quad \text{(by Markov's inequality)} \\
  &\le \exp \left( -\l t + C d L^2 \l^2 \right) + 2 \exp \Big( - \frac{cn}{d} \Big)
  	\quad \text{(by \eqref{eq: MGF increment} and \eqref{eq: EE prob})}.
\end{align*}
This bound is minimized for $\l \coloneqq t/(2CdL^2)$.
With this choice of $\l$, and with the choice of $L$ made in \eqref{eq: L}, 
our bound becomes 
\begin{equation}	\label{eq: prob two terms}
\Pr{f-\E f > t} \le \exp \Big( -\frac{t^2}{16C d n^{d-1}} \Big) + 2 \exp \Big( - \frac{cn}{d} \Big).
\end{equation}

Since by assumption of the theorem, 
\begin{align*} 
t^2 \le 4\E \abs[1]{f \big( x_1 \otimes \cdots \otimes x_d \big) \red{-f(0)} }^2
  &\le 4\E \norm[1]{x_1 \otimes \cdots \otimes x_d}_2^2
  	\quad \text{(since $\norm{f}_\Lip = 1$)}	\\
  &= 4\E \norm[1]{x_1}_2^2 \cdots \norm[1]{x_d}_2^2 
  = 4n^d,
\end{align*}
we have \label{page: using f assumption} 
$$
\frac{t^2}{d n^{d-1}} \le \frac{4n}{d},
$$
This implies that the first term in the bound \eqref{eq: prob two terms} 
dominates over the second, if the constant $C$ is sufficiently large compared to $1/c$.
This gives 
$$
\Pr{f-\E f > t}
\le 3\exp \Big( -\frac{t^2}{16C d n^{d-1}} \Big).
$$

Finally, repeating the argument for $-f$ instead of $f$ we obtain the same probability bound 
for $\Pr{-f+\E f > t}$. Combining the two bounds, we get 
$$
\Pr{\abs{f-\E f} > t}
\le 6\exp \Big( -\frac{t^2}{16C d n^{d-1}} \Big).
$$
We can replace the factor $6$ by $2$ by making $C$ larger if necessary. 
Theorem~\ref{thm: convex concentration tensor} is proved.
\qed

Our argument above used on the following martingale-like inequality, 
which we will carefully state and prove now.

\begin{lemma}[A martingale-type inequality]		\label{lem: martingale}
  Let $x_1,\ldots, x_d$ be independent random vectors. 
  For each $k=1,\ldots,d$, let $f_k = f_k(x_k, \ldots, x_d)$ be an integrable real-valued function
  and $\EE_k$ be an event that is uniquely determined by the vectors $x_{k+1}, \ldots, x_d$.
  Let $\EE_{d+1}$ be the entire probability space for convenience.
  Suppose that, for every $k=1,\ldots,d$:
  $$
  \E_{x_k} \exp(f_k) \le \pi_k
  \quad \text{for every choice of } x_{k+1}, \ldots, x_d \text{ satisfying } \EE_{k+1}.
  $$
  Then, for $\EE \coloneqq \EE_2 \cap \cdots \cap \EE_d$, we have
  $$
  \E \exp(f_1+\cdots+f_d) \one_\EE \le \pi_1 \cdots \pi_d.
  $$
\end{lemma}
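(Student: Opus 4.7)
The plan is to prove the inequality by induction on $d$, peeling off the outermost function $f_1$ first. The key structural observation is that since $\EE_k$ depends only on $x_{k+1},\ldots,x_d$, the indicator $\one_\EE$ and the partial sum $f_2 + \cdots + f_d$ are measurable with respect to $\sigma(x_2,\ldots,x_d)$ and in particular independent of $x_1$. Conditioning on $x_2,\ldots,x_d$ and pulling the $x_1$-independent factors outside therefore gives
\[
\E \exp(f_1 + \cdots + f_d)\, \one_\EE
= \E\Big[ \exp(f_2 + \cdots + f_d)\, \one_\EE \cdot \E_{x_1} \exp(f_1) \Big].
\]

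On the support of $\one_\EE$ the event $\EE_2$ holds, so the hypothesis at level $k=1$ supplies $\E_{x_1} \exp(f_1) \le \pi_1$ there. Discarding the now-superfluous factor $\one_{\EE_2} \le 1$ then yields
\[
\E \exp(f_1 + \cdots + f_d)\, \one_\EE
\le \pi_1 \, \E\Big[ \exp(f_2 + \cdots + f_d)\, \one_{\EE_3 \cap \cdots \cap \EE_d} \Big].
\]
The expression on the right has exactly the same shape as the statement of the lemma, but with one fewer function (namely $f_2,\ldots,f_d$) and the shifted indicator $\one_{\EE_3 \cap \cdots \cap \EE_d}$; the hypothesis for each remaining $k \ge 2$ carries over verbatim. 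Induction on $d$ therefore bounds the remaining expectation by $\pi_2 \cdots \pi_d$, and multiplying by $\pi_1$ closes the argument. The base case $d=1$ is just the stated hypothesis, since then $\EE$ is the entire probability space.

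I do not anticipate any real obstacle: the proof is essentially iterated Fubini combined with the tower property. The only point requiring some care is the slightly asymmetric indicator $\EE = \EE_2 \cap \cdots \cap \EE_d$, which omits $\EE_1$; this is precisely the amount of information needed to activate the bound on $\E_{x_1}\exp(f_1)$, after which $\one_{\EE_2}$ has served its purpose and can be dropped to match the form required by the inductive hypothesis at the next level.
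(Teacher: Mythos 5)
Your proposal is correct and matches the paper's own argument: the paper likewise conditions on $x_2,\ldots,x_d$, uses $\E_{x_1}\exp(f_1)\one_{\EE_2} \le \pi_1$ on the good event, drops $\one_{\EE_2}$, and iterates (which is exactly your induction on $d$). No gaps.
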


\begin{proof}
We have 
\begin{align*} 
\E \exp(f_1+\cdots+f_d) \one_{\EE_2 \cap \cdots \cap \EE_d}
  &= \E \exp(f_2+\cdots+f_d) \one_{\EE_3 \cap \cdots \cap \EE_d}
	\E_{x_1} \exp(f_1) \one_{\EE_2} \\
  &\le \pi_1 \E \exp(f_2+\cdots+f_d) \one_{\EE_3 \cap \cdots \cap \EE_d}
\end{align*}
since $\E_{x_1} \exp(f_1) \one_{\EE_2} \le \pi_1$ a.s. by assumption.
Iterating this argument, we complete the proof.
\end{proof}

\section{Proof of Theorem~\ref{thm: Euclidean concentration tensor}}		\label{s: subgaussian tensor}

Let us restate Theorem~\ref{thm: Euclidean concentration tensor} in terms 
of Euclidean functions which were introduced in Section~\ref{s: Euclidean functions}.

\begin{theorem}[Euclidean concentration for random tensors]	\label{thm: Euclidean concentration tensor restated}
  Let $n$ and $d$ be positive integers and 
  $f: (\R^{n^d}, \norm{\cdot}_2) \to [0,\infty)$ be a Euclidean function. 
  Consider a simple random tensor $X \coloneqq x_1 \otimes \cdots \otimes x_d$ 
  in $\R^{n^d}$, where all $x_k$ are independent random vectors in $\R^n$ whose coordinates
  are independent, mean zero, unit variance, subgaussian random variables.
  Then, for every $0 \le t \le 2(\E f(X)^2)^{1/2}$, we have
  $$
  \Pr{ \abs{f(X) - (\E f(X)^2)^{1/2} } \ge t  } 
  \le 2 \exp \Big( -\frac{ct^2}{d n^{d-1} \norm{f}_\Lip^2} \Big).
  $$
  Here $c>0$ depends only on the bound on the subgaussian norms. 
\end{theorem}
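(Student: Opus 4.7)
The plan is to carry out the strategy sketched in Section~\ref{s: approach}: work with $f(X)^2$ so that iterated partial expectations stay Euclidean (property (iii)), telescope along the $d$ factors, control the conditional MGF of each increment using Corollary~\ref{cor: MGF Euclidean}, and glue the pieces together via the martingale-style Lemma~\ref{lem: martingale} coupled with the maximal inequality (Lemma~\ref{lem: maximal inequality}).

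Normalize $\norm{f}_\Lip = 1$, write $f(v) = \norm{Av}_H$ with $\norm{A}_\op = 1$, and set
$$
F_k(x_{k+1}, \ldots, x_d) := \bigl( \E_{x_1, \ldots, x_k} f(X)^2 \bigr)^{1/2}, \qquad k = 0, \ldots, d,
$$
so $F_0 = f(X)$ and $F_d = (\E f(X)^2)^{1/2}$ is deterministic. Iterating property (iii) shows that $F_{k-1}$ is a Euclidean function of the remaining variables, in particular of $x_k$ alone with the others held fixed. Writing $y := x_1 \otimes \cdots \otimes x_{k-1}$, the identity $\E y y^\tran = I_{n^{k-1}}$ expands $F_{k-1}^2 = \sum_j \norm{A(e_j \otimes x_k \otimes \cdots \otimes x_d)}_H^2$, and the Lipschitz-norm formula for Euclidean functions (property (iv)) then yields $\norm{F_{k-1}}_{\Lip(x_k)}^2 \le n^{k-1} \prod_{i=k+1}^d \norm{x_i}_2^2$. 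Lemma~\ref{lem: maximal inequality}, applied to the factors in reverse order with $u = 1$, produces an event $\EE$ of probability at least $1 - 2\exp(-cn/d)$ on which this bound simplifies uniformly to $\norm{F_{k-1}}_{\Lip(x_k)}^2 \le L^2 := 4 n^{d-1}$ for every $k$.

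Now telescope $f(X)^2 - \E f(X)^2 = \sum_{k=1}^d \Delta_k$ with $\Delta_k := F_{k-1}^2 - F_k^2$ and $\E_{x_k} \Delta_k = 0$. Corollary~\ref{cor: MGF Euclidean}, applied conditionally on $\EE$, gives the key MGF bound $\E_{x_k} \exp(\l \Delta_k) \le \exp(C \l^2 L^2 F_k^2)$ for $\abs{\l} \le c/L^2$. The principal obstacle is that the prefactor $F_k^2$ is itself random; replacing it by its worst-case value $\asymp n^d$ costs a factor $n^d / \E f(X)^2$, which is fatal when $\norm{A}_\HS^2$ is much smaller than $n^d \norm{A}_\op^2$. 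The fix, and the heart of the argument, is to absorb this random prefactor into the next telescopic piece using $F_k^2 = F_{k+1}^2 + \Delta_{k+1}$. Inductively in $k$, one shows that after integrating out $x_1, \ldots, x_k$,
$$
\E \one_\EE \exp\Bigl( \l \sum_{j=1}^d \Delta_j \Bigr) \le \E \one_\EE \exp\Bigl( \l \sum_{j=k+1}^d \Delta_j + a_k F_k^2 \Bigr),
$$
with $a_0 = 0$ and $a_{k+1} = a_k + C(\l + a_k)^2 L^2$. Restricting to $\abs{\l} \le c/(d L^2)$ keeps $a_k \le 4 C k \l^2 L^2 \le \l$, so the Corollary's admissibility condition $\abs{\l + a_k} \le c/L^2$ survives each step; at $k = d$ the random term disappears because $F_d^2 = \E f(X)^2$ is deterministic, leaving
$$
\E \one_\EE \exp\bigl( \l (f(X)^2 - \E f(X)^2) \bigr) \le \exp\bigl( C' d \l^2 L^2 \E f(X)^2 \bigr).
$$

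Exponential Markov optimized at $\l \asymp s / (d L^2 \E f(X)^2)$, admissible for $s \lesssim \E f(X)^2$, gives $\Pr{\abs{f(X)^2 - \E f(X)^2} > s} \le 2 \exp(-c s^2 / (d L^2 \E f(X)^2)) + 2 \exp(-cn/d)$. Since $f \ge 0$, we have $\abs{f - (\E f^2)^{1/2}} \le \abs{f^2 - \E f^2} / (\E f^2)^{1/2}$; substituting $s = t (\E f(X)^2)^{1/2}$ and reinstating the suppressed factor $\norm{f}_\Lip^2$ yields
$$
\Pr{\abs{f(X) - (\E f(X)^2)^{1/2}} > t} \le 2 \exp\Bigl(-\frac{c t^2}{d n^{d-1} \norm{f}_\Lip^2}\Bigr) + 2 \exp\Bigl(-\frac{cn}{d}\Bigr).
$$
In the stated range $t \le 2(\E f(X)^2)^{1/2} \le 2 n^{d/2} \norm{f}_\Lip$, the second summand is dominated by the first after an adjustment of constants, completing the proof. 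The iterative absorption step is the one genuinely delicate ingredient: one has to track the perturbed parameter $\l + a_k$ through all $d$ conditional MGF applications inside the narrow admissible window $\abs{\l} \le c/(d L^2)$, but this window is precisely what produces the factor $d^{-1}$ inside the target exponent.
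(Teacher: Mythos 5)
Your argument is correct and follows essentially the same route as the paper's proof: normalize the Lipschitz norm, set up the events $\EE_k$ via the maximal inequality to cap the conditional Lipschitz constants at $L = 2n^{(d-1)/2}$, telescope $f^2$, apply the MGF bound of Corollary~\ref{cor: MGF Euclidean} at each level, and absorb the random prefactor $\E_{x_1,\ldots,x_k} f^2$ into the next increment by inflating the multiplier. Your sequence $a_k$ is exactly the paper's $\l_k - \l_0$ (the paper packages the same induction in Lemma~\ref{lem: multipliers}), and your passage from $f^2$- to $f$-concentration via $\abs{f-\mu} \le \abs{f^2-\mu^2}/\mu$ for $f \ge 0$ is an equivalent substitute for the paper's implication $\abs{z-1}\ge\d \Rightarrow \abs{z^2-1}\ge\max(\d,\d^2)$.
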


\begin{proof}
{\em Step 1. Applying the maximal inequality.}
The proof starts as in the proof of Theorem~\ref{thm: convex concentration tensor} in Section~\ref{s: Talagrand tensor}. We define the norm-controlling events $\EE_k$ and estimate the probability of 
$\EE = \EE_2 \cap \cdots \EE_d$ by a maximal inequality in the same way as before.

\medskip

{\em Step 2. Applying a subgaussian concentration inequality.}
Fix any realization of the random vectors $x_2, \ldots, x_d$ that satisfy $\EE_2$
and apply Corollary~\ref{cor: MGF Euclidean} for 
$f$ as a function of $x_1$. It is a Euclidean function, and one can check as before that its 
Lipschitz norm is bounded by 
\begin{equation}	\label{eq: L subgaussian}
L \coloneqq 2n^{(d-1)/2}.
\end{equation}
Corollary~\ref{cor: MGF Euclidean} then yields
$$
\E_{x_1} \exp \Big( \l \big( f^2 - \E_{x_1} f^2 \big) \Big) 
\le \exp \left( C \l^2 L^2 \E_{x_1} f^2 \right)
$$
provided that $\abs{\l} \le c/L^2$.
For future convenience, let us restate this bound as follows. 
Choose $\l \in \R$ and denote
$$
\l_0 \coloneqq \l; \quad
\l_1 \coloneqq \l_0 + C \l_0^2 L^2.
$$
Then we have
$$
\E_{x_1} \exp \left( \l_0 f^2 - \l_1 \E_{x_1} f^2 \right) \le 1
\quad \text{for any } x_2, \ldots, x_d \text{ that satisfy } \EE_2,
$$
provided that $\abs{\l_0} \le c/L^2$.

Fix any realization of the random vectors $x_3, \ldots, x_d$ that satisfy $\EE_3$
and apply Corollary~\ref{cor: MGF Euclidean} for 
$(\E_{x_1} f^2)^{1/2}$ as a function of $x_2$. 
It is a Euclidean function whose Lipschitz norm is bounded by $L$ as before. 
Corollary~\ref{cor: MGF Euclidean} then yields
$$
\E_{x_2} \exp \Big( \l_1 \big( \E_{x_1} f^2 - \E_{x_1, x_2} f^2 \big) \Big) 
\le \exp \left( C \l_1^2 L^2 \E_{x_1, x_2} f^2 \right)
$$
provided that $\abs{\l_1} \le c/L^2$.
We can restate this bound as follows. Denote
$$
\l_2 \coloneqq \l_1 + C \l_1^2 L^2.
$$
Then we have
$$
\E_{x_2} \exp \left( \l_1 \E_{x_1} f^2 - \l_2 \E_{x_1, x_2} f^2 \right) \le 1
\quad \text{for any } x_3, \ldots, x_d \text{ that satisfy } \EE_3,
$$
provided that $\abs{\l_1} \le c/L^2$.

Continuing in a similar way, we can show the following for every $k = 1,\ldots,d$.
Denote 
$$
\l_k \coloneqq \l_{k-1} + C \l_{k-1}^2 L^2.
$$
Then we have
$$
\E_{x_2} \exp \left( \l_{k-1} \E_{x_1, \ldots, x_{k-1}} f^2 
	- \l_k \E_{x_1, \ldots, x_k} f^2 \right) 
\le 1
\quad \forall x_{k+1}, \ldots, x_d \text{ that satisfy } \EE_{k+1},
$$
provided that $\abs{\l_{k-1}} \le c/L^2$.

\medskip

{\em Step 3. Combining the increments using a martingale-like argument.}
Combining the pieces into a telescoping sum using Lemma~\ref{lem: martingale}, 
we obtain
\begin{equation}	\label{eq: l0 ld}
\E \exp \left( \l_0 f^2 - \l_d \E f^2 \right) \one_\EE
\le 1
\end{equation}
provided that
\begin{equation}	\label{eq: all lk}
\abs{\l_k} \le \frac{c}{L^2}
\quad \text{for all } k=0,\ldots,d-1.
\end{equation}
If we choose $\l = \l_0 \in \R$ so that $\abs{\l} \le c_0/(dL^2)$ with a sufficiently 
small absolute constant $c_0$, then we can show by induction that 
\eqref{eq: all lk} holds and, moreover,
$$
\l_d \le \l + 2Cd L^2 \l^2.
$$
We defer the verification of both of these bounds to Lemma~\ref{lem: multipliers} below. 
Substituting them into \eqref{eq: l0 ld} and rearranging the terms, we conclude that
$$
\E \exp \left( \l (f^2 - \E f^2) \right) \one_\EE
\le \exp \left( 2Cd L^2 \l^2 \E f^2 \right)
\quad \text{if } \abs{\l} \le \frac{c_0}{dL^2}.
$$
Replacing $\l$ with $-\l$, we see that the same bound holds for 
$\E \exp \left( -\l f^2 + \l \E f^2 \right)$. 
Since the inequality $e^{\abs{z}} \le e^z + e^{-z}$ holds for all $z \in \R$, 
we obtain
$$
\E \exp \left( \l \abs[1]{f^2 - \E f^2} \right) \one_\EE
\le 2\exp \left( 2Cd L^2 \l^2 \E f^2 \right)
\quad \text{if } \abs{\l} \le \frac{c_0}{dL^2}.
$$

\medskip

{\em Step 4. Deriving a concentration inequality for $f^2$.}
Using the exponential Markov's inequality just like 
we did in the proof of Theorem~\ref{thm: convex concentration tensor} in Section~\ref{s: Talagrand tensor}, 
we get
$$
\Pr{\abs[1]{f^2-\E f^2} > u}
\le 2\exp \left( -\l u + 2C d L^2 \l^2 \E f^2 \right) + 2\exp \Big( - \frac{cn}{d} \Big)
$$
for any $u > 0$ and any $0 \le \l \le c_0/(dL^2)$.

Let us optimize the right hand side in $\l$. A good choice is 
$$
\l \coloneqq \frac{c_1}{dL^2} \min \Big( \frac{u}{\E f^2}, \, 1 \Big)
$$
for a sufficiently small constant $c>0$. Indeed, if $c_1 \le c_0$ 
then $\l$ lies in the required range $0 \le \l \le c_0/(dL^2)$, and if $c_1 \le 1/(4C)$ then 
substituting this choice of $\l$ into our probability bound gives
$$
\Pr{\abs[1]{f^2-\E f^2} > u}
\le 2\exp \left( -\frac{c_1}{2dL^2} \min \Big( \frac{u^2}{\E f^2}, \, u \Big) \right) 
	+ 2\exp \Big( - \frac{cn}{d} \Big).
$$

\medskip

{\em Step 5. Deriving a concentration inequality for $f$.}
Choose any $\e \ge 0$ and substitute $u \coloneqq \e \E f^2$ into our probability bound. 
We get
$$
\Pr{\abs[1]{f^2-\E f^2} > \e \E f^2}
\le 2\exp \left( -\frac{c_1}{2dL^2} \min(\e^2,\e) \E f^2 \right) 
	+ 2\exp \Big( - \frac{cn}{d} \Big).
$$

Now choose any $\d \ge 0$ and apply this bound for $\e \coloneqq \max(\d,\d^2)$.
Then $\min(\e^2,\e) = \d^2$, and one can easily check the following implication
$$
\abs[1]{f - (\E f^2)^{1/2}} > \d (\E f^2)^{1/2}
\quad \Longrightarrow \quad
\abs[1]{f^2-\E f^2} > \e \E f^2.
$$
(This follows from the implication 
$\abs{z-1} \ge \d \Rightarrow \abs{z^2-1} \ge \max(\d,\d^2)$
that is valid for all $z \ge 0.)$
Hence we obtain 
$$
\Pr{\abs[1]{f - (\E f^2)^{1/2}} > \d (\E f^2)^{1/2}}
\le 2\exp \left( -\frac{c_1 \d^2 \E f^2}{2dL^2} \right) 
	+ 2\exp \Big( - \frac{cn}{d} \Big).
$$

Now choose any $t \ge 0$ and apply this bound for $\d \coloneqq t/(\E f^2)^{1/2}$.
Recalling the value of $L$ from \eqref{eq: L subgaussian}, we get
$$
\Pr{\abs[1]{f - (\E f^2)^{1/2}} > t}
\le 2\exp \left( -\frac{c_1 t^2}{8 d n^{d-1}} \right) 
	+ 2\exp \Big( - \frac{cn}{d} \Big).
$$

Finally, we can use the theorem's assumption on $t$ to get rid of the second exponential term
just like we did in in the proof of Theorem~\ref{thm: convex concentration tensor} in Section~\ref{s: Talagrand tensor}.
The proof is complete. 
\end{proof}

In Step~3 of the argument above, we used the following bound on the multipliers $\l_k$, which we 
promised to prove later. Let us do it now.

\begin{lemma}[Multipliers]		\label{lem: multipliers}
  Let $d, M \ge 0$ and consider a number $\l_0 \in \R$ such that 
  \begin{equation}	\label{eq: l0}
  \abs{\l_0} \le \frac{1}{8dM}.
  \end{equation}
  Define $\l_1,\ldots,\l_d \in \R$ inductively by the formula
  $$
  \l_k \coloneqq \l_{k-1} + M \l_{k-1}^2, 
  \quad k=1,\ldots,d.
  $$
  Then, for every $k=1,\ldots,d$, we have:
  $$
  \abs{\lambda_k} \le \frac{1}{6dM}
  \quad \text{and} \quad
  \l_k \le \l_0 + 2kM \l_0^2.
  $$
\end{lemma}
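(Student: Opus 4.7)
The plan is a straightforward induction on $k$, carried out in two stages: first pin down the absolute value $|\lambda_k|$, then use that control to get the one-sided bound $\lambda_k \le \lambda_0 + 2kM\lambda_0^2$ via a telescoping argument.

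For the first bound, I would prove by induction on $k \le d$ that $|\lambda_k| \le 1/(6dM)$. The base case $k=0$ follows from $1/(8dM) \le 1/(6dM)$. For the inductive step, the recurrence yields
\[
  |\lambda_k| \le |\lambda_{k-1}| \bigl(1 + M|\lambda_{k-1}|\bigr),
\]
and the inductive hypothesis together with the previous levels gives $M|\lambda_j| \le 1/(6d)$ for all $j \le k-1$. Iterating the recurrence all the way down to $\lambda_0$ yields
\[
  |\lambda_k| \le |\lambda_0| \Bigl(1 + \tfrac{1}{6d}\Bigr)^k \le |\lambda_0| \Bigl(1 + \tfrac{1}{6d}\Bigr)^d \le |\lambda_0| \, e^{1/6} \le \tfrac{4}{3} |\lambda_0|.
\]
Combining with the hypothesis $|\lambda_0| \le 1/(8dM)$ gives $|\lambda_k| \le (4/3) \cdot 1/(8dM) = 1/(6dM)$, closing the induction.

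For the second bound, the key observation is monotonicity: since $\lambda_k - \lambda_{k-1} = M\lambda_{k-1}^2 \ge 0$, the sequence $(\lambda_j)$ is non-decreasing, so I can simply telescope:
\[
  \lambda_k = \lambda_0 + \sum_{j=0}^{k-1} M \lambda_j^2.
\]
Using the bound $|\lambda_j| \le (4/3)|\lambda_0|$ from Stage 1, each term satisfies $M\lambda_j^2 \le (16/9) M \lambda_0^2 \le 2 M \lambda_0^2$, so $\lambda_k \le \lambda_0 + 2kM \lambda_0^2$ as required.

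I do not expect a real obstacle here: the only subtlety is keeping the signed inequality and the absolute-value inequality properly separated, which the monotonicity of the sequence handles cleanly. The numerical slack between the hypothesis $1/(8dM)$ and the conclusion $1/(6dM)$ is exactly what is needed to absorb the exponential factor $e^{1/6}$ that arises from iterating the recurrence $d$ times.
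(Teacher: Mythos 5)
Your proof is correct, and it is essentially the paper's argument with the two bounds obtained in the opposite order: the paper inducts on the signed inequality $\lambda_k \le \lambda_0 + 2kM\lambda_0^2$, extracting $|\lambda_k| \le \tfrac{5}{4}|\lambda_0|$ along the way from monotonicity of $(\lambda_k)$ and the triangle inequality, and then reads off $|\lambda_k| \le \tfrac{1}{6dM}$, whereas you first prove $|\lambda_k| \le |\lambda_0|\bigl(1+\tfrac{1}{6d}\bigr)^k \le \tfrac{4}{3}|\lambda_0|$ by a multiplicative strong induction and then telescope $\lambda_k = \lambda_0 + \sum_{j=0}^{k-1} M\lambda_j^2$ for the signed bound. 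Both routes use exactly the same ingredients (monotonicity and the slack in the hypothesis $|\lambda_0| \le \tfrac{1}{8dM}$), and your numerology checks out: $e^{1/6} \le \tfrac{4}{3}$, $\tfrac{4}{3}\cdot\tfrac{1}{8} = \tfrac{1}{6}$, and $\tfrac{16}{9} \le 2$.
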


\begin{proof}
We can prove the second inequality in the conclusion by induction.
Assume that it holds for some $k$, i.e. 
\begin{equation}	\label{eq: lk vs l0}
\l_k \le \l_0 + 2kM \l_0^2.
\end{equation}
By construction, the sequence $(\l_k)$ is increasing, so the triangle inequality gives
\begin{equation}	\label{eq: abs lk prelim}
\abs{\l_k} 
\le \abs{\l_0} + \abs{\l_k-\l_0}
\le \abs{\l_0} + \l_k-\l_0
\le \abs{\l_0} + 2kM \l_0^2.
\end{equation}
Furthermore, the assumption \eqref{eq: l0} implies that
$$
2kM \l_0^2 \le 2dM \l_0^2 \le \frac{\abs{\l_0}}{4}.
$$
Substituting this into \eqref{eq: abs lk prelim}, we get
\begin{equation}	\label{eq: abs lk vs l0}
\abs{\l_k} \le \frac{5}{4} \abs{\l_0}.
\end{equation}
Then we have
\begin{align*} 
\l_{k+1} 
  &= \l_k + M \l_k^2
  	\quad \text{(by construction)} \\
  &\le \l_0 + 2kM \l_0^2 + M \Big( \frac{5}{4} \abs{\l_0} \Big)^2
   	\quad \text{(by \eqref{eq: lk vs l0} and \eqref{eq: abs lk vs l0})} \\
  &\le \l_0 + 2(k+1) M \l_0^2.
\end{align*}
Thus we proved \eqref{eq: lk vs l0} for $k+1$, so the second inequality in 
the conclusion is verified.

The first bound in the conclusion follows from the first. Indeed, 
using \eqref{eq: abs lk vs l0} and \eqref{eq: abs lk prelim}, we get
$$
\abs{\l_k} 
\le \frac{5}{4} \abs{\l_0}
\le \frac{5}{4} \cdot \frac{1}{8dM} \le \frac{1}{6dM}
$$
as claimed. The proof is complete.
\end{proof}

\section{Applications}				\label{s: applications}

In this section we state and prove a full version of Theorem~\ref{cor: well conditioned intro}
that states that random tensors are well conditioned. But before we do so, let us prove a result that may have an independent interest, namely a concentration inequality for the distance
between a random tensor $X$ and a given subspace $L$. 

\begin{corollary}[Distance to a subspace]		\label{cor: dist}
  Let $n$ and $d$ be positive integers and $L \subset \R^{n^d}$ be a linear subspace
  with $k \coloneqq \codim(L)$.
  Consider a simple random tensor $X \coloneqq x_1 \otimes \cdots \otimes x_d$ 
  in $\R^{n^d}$, where all $x_k$ are independent random vectors in $\R^n$ whose coordinates
  are independent, mean zero, unit variance, subgaussian random variables.
  Then, for every $0 \le t \le 2 \sqrt{k}$, we have
  $$
  \Pr{ \abs{\dist(X,L) - \sqrt{k}} \ge t } 
  \le 2 \exp \Big( -\frac{ct^2}{d n^{d-1}} \Big).
  $$
  Here $c>0$ depends only on the bound on the subgaussian norms.  
\end{corollary}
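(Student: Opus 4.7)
The plan is to recognize $\dist(X, L)$ as a Euclidean function of $X$ and then invoke Theorem~\ref{thm: Euclidean concentration tensor} directly.

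\smallskip
\textbf{Step 1: Express the distance via a projection.} Let $P : \R^{n^d} \to \R^{n^d}$ be the orthogonal projection onto $L^\perp$. Then
$$
\dist(X, L) = \norm{PX}_2,
$$
which has the form $\norm{AX}_H$ with $A \coloneqq P$ and $H \coloneqq \R^{n^d}$. In particular, $\dist(\cdot, L)$ is a Euclidean function on $\R^{n^d}$ in the sense of Section~\ref{s: Euclidean functions}.

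\smallskip
\textbf{Step 2: Compute the two relevant norms of $P$.} Since $P$ is an orthogonal projection, $\norm{P}_\op = 1$, and
$$
\norm{P}_\HS^2 = \tr(P^\tran P) = \tr(P) = \rank(P) = \codim(L) = k,
$$
so $\norm{P}_\HS = \sqrt{k}$.

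\smallskip
\textbf{Step 3: Apply Theorem~\ref{thm: Euclidean concentration tensor}.} The hypotheses of that theorem are satisfied by $X$ and $A = P$, so for every $0 \le t \le 2\norm{P}_\HS = 2\sqrt{k}$ we obtain
$$
\Pr{ \abs{\norm{PX}_2 - \sqrt{k}} \ge t }
\le 2 \exp \Big( -\frac{c t^2}{d n^{d-1} \norm{P}_\op^2} \Big)
= 2 \exp \Big( -\frac{c t^2}{d n^{d-1}} \Big),
$$
which is exactly the stated conclusion after substituting $\dist(X,L) = \norm{PX}_2$.

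\smallskip
There is no serious obstacle here: the entire work has already been absorbed into Theorem~\ref{thm: Euclidean concentration tensor}, and the only content of the corollary is the translation between ``distance to a subspace'' and ``norm of a projection,'' together with the identification of the Hilbert--Schmidt and operator norms of an orthogonal projection. The one small thing worth double-checking is that the centering $\sqrt{k}$ in the corollary matches the centering $\norm{A}_\HS$ in the theorem, which it does since $\norm{P}_\HS^2 = k$.
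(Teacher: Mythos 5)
Your argument is correct and is exactly the paper's proof: apply Theorem~\ref{thm: Euclidean concentration tensor} to the orthogonal projection $P$ onto $L^\perp$, using $\norm{P}_\op = 1$ and $\norm{P}_\HS = \sqrt{k}$. Nothing further is needed.
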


\begin{proof}
Apply Theorem~\ref{thm: Euclidean concentration tensor} for the orthogonal projection $P$ in $\R^{n^d}$
onto $L^\perp$ and note that $\norm{P}_\op = 1$ and 
$\norm{P}_\HS = \sqrt{\dim(L^\perp)} = \sqrt{k}$.
\end{proof}

For $d=1$, Corollary~\ref{cor: dist} recovers the known optimal concentration inequalities for 
the distance between a random vector and a fixed subspace
are known (see e.g. \cite[Corollary 2.1.19]{Tao book}, \cite{RV}, \cite[Exercise~6.3.4]{V book}), 
which are frequently used in random matrix theory.
Some previously known extensions 
for tensors of degrees $d \ge 2$ were given in \cite{Baskara, Anderson, Abbe, Anari}. 

\medskip

Now we are ready to state and prove a rigorous version of Theorem~\ref{cor: well conditioned intro}:

\begin{corollary}[Random tensors are well conditioned]	\label{cor: well conditioned}
  Consider independent simple subgaussian random tensors $X_1,\ldots,X_m$ 
  (defined like a tensor $X$ in Corollary~\ref{cor: dist}).
  Let $\e$ be such that $Cd^2 \log(n)/n \le \e \le 1/2$.
  If $m \le (1-\e) n^d$ then, with probability at least $1-2\exp(-c \e n/d)$, we have
  $$
  \Big\| \sum_{i=1}^m a_i X_i \Big\|_2 \ge \frac{\sqrt{\e}}{2} \, \norm{a}_2
  \quad \text{for all } a = (a_1,\ldots,a_m) \in \R^m.
  $$
\end{corollary}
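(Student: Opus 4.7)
The plan is to reduce the well-conditioning estimate to a distance bound via the standard ``negative second moment'' principle for the smallest singular value, and then apply the concentration inequality for distances (Corollary~\ref{cor: dist}) column by column.

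First, I recall the elementary fact that if $\mathbf{X}$ is an $N \times m$ matrix with columns $X_1,\ldots,X_m$ and $H_i \coloneqq \Span(X_j : j \ne i)$, then
$$
\frac{1}{\s_{\min}(\mathbf{X})^2} \le \sum_{i=1}^m \frac{1}{\dist(X_i, H_i)^2}.
$$
(This is the ``Tao--Vu/Rudelson--Vershynin'' identity; it follows from $\s_{\min}(\mathbf{X}) = \min_i \dist(X_i, H_i) / \| (\mathbf{X}^\tran \mathbf{X})^{-1/2} e_i \|$, or equivalently by examining $\| (\mathbf{X}^\tran \mathbf{X})^{-1} \|_\HS^2$.) In particular, if $\dist(X_i, H_i) \ge r$ for every $i$ on some event, then on that event $\s_{\min}(\mathbf{X}) \ge r/\sqrt{m}$, which gives
$$
\Big\| \sum_{i=1}^m a_i X_i \Big\|_2 \ge \frac{r}{\sqrt{m}} \norm{a}_2
\quad \text{for all } a \in \R^m.
$$

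Second, I fix $i$ and condition on the other tensors $\{X_j\}_{j \ne i}$. Then $H_i$ is a deterministic subspace of $\R^{n^d}$ with $\dim(H_i) \le m-1$, so its codimension satisfies
$$
k_i \coloneqq \codim(H_i) \ge n^d - m + 1 \ge \e n^d.
$$
Since $X_i$ is independent of $\{X_j\}_{j \ne i}$, Corollary~\ref{cor: dist} applies to the random tensor $X_i$ and the (conditionally fixed) subspace $H_i$. Taking $t = \sqrt{k_i}/2 \le 2\sqrt{k_i}$, it yields
$$
\Pr{ \dist(X_i, H_i) < \tfrac{1}{2}\sqrt{k_i} }
\le 2 \exp\Big( -\frac{c k_i}{4 d n^{d-1}} \Big)
\le 2 \exp\Big( -\frac{c \e n}{4d} \Big).
$$

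Third, I union bound over the $m \le n^d$ values of $i$ to get
$$
\Pr{ \exists i :\; \dist(X_i, H_i) < \tfrac{1}{2}\sqrt{\e n^d} }
\le 2 n^d \exp\Big( -\frac{c \e n}{4d} \Big)
\le 2 \exp\Big( d \log n - \frac{c \e n}{4d} \Big).
$$
The hypothesis $\e \ge C d^2 \log(n)/n$ (for $C$ large enough) absorbs the $d \log n$ term, so the bound becomes $2\exp(-c \e n /d)$ after shrinking $c$. On the complementary event, plugging $r = \tfrac{1}{2}\sqrt{\e n^d}$ into the first step gives
$$
\Big\| \sum_i a_i X_i \Big\|_2 \ge \frac{\sqrt{\e n^d}}{2\sqrt{m}} \norm{a}_2 \ge \frac{\sqrt{\e}}{2} \norm{a}_2,
$$
using $m \le n^d$. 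This completes the proof.

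The only mildly delicate point is the conditioning argument in the second step: one must verify that Corollary~\ref{cor: dist} remains valid when the subspace $L$ is random but independent of $X$, which is immediate because the corollary's probability bound depends only on $\codim(L)$ and holds uniformly over all subspaces of that codimension. Everything else is bookkeeping; the codimension calculation, the union bound, and the arithmetic comparison between $\sqrt{k_i}/\sqrt{m}$ and $\sqrt{\e}/2$ are all straightforward once the assumption on $\e$ is used to tame the union bound.
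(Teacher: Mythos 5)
Your proof is correct and follows essentially the same route as the paper: a leave-one-out reduction of the smallest singular value to $\min_j \dist(X_j, \Span(X_i)_{i\ne j})/\sqrt{m}$, followed by a conditional application of Corollary~\ref{cor: dist} with $t \asymp \sqrt{\e n^d}$, a union bound over the $m \le n^d$ columns absorbed by the assumption $\e \ge C d^2 \log(n)/n$, and the final arithmetic $\sqrt{\e n^d}/(2\sqrt{m}) \ge \sqrt{\e}/2$. Invoking the negative second moment identity is harmless but unnecessary, since you only use its trivial consequence, which is exactly the paper's leave-one-out bound.
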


Note that this result is nontrivial for $d = O(\sqrt{n/\log n})$, because only in this range
is the range of $\e$ nonempty.

\begin{proof}
We can assume that $\norm{a}_2 = 1$ without loss of generality. 
A simple ``leave-one-out'' bound gives 
\begin{equation}	\label{eq: leave-one-out}
\Big\| \sum_{i=1}^m a_i X_i \Big\|_2 
\ge \frac{1}{\sqrt{m}} \, \min_{j=1,\ldots,m} \dist(X_j, L_j)
\end{equation}
where $L_j$ is the linear span of the $m-1$ vectors $(X_i)_{i \ne j}$. 
Since $\dim(L_j) \le m-1 \le (1-\e) n^d$, we have $\codim(L_j) \ge \e n^d$.

Fix $j$ and apply Corollary~\ref{cor: dist} with $t = \sqrt{\e n^d}/2$ 
conditionally on $(X_i)_{i \ne j}$. It gives
\begin{equation}	\label{eq: dist j}
d(X_j, L_j) \ge \frac{\e n^d}{2}
\end{equation}
with probability at least $1 - 2\exp(-c\e n/2d)$. Taking the union bound over 
$j = 1,\ldots,m$, we conclude that all events \eqref{eq: dist j} hold smultaneously
with probability at least 
$$
1 - 2m\exp \Big( -\frac{c\e n}{2d} \Big)
\ge 1-2\exp \Big( -\frac{c\e n}{4d} \Big),
$$
where we used that $m \le n^d$ and the assumption on $\e$.
Substitute this into the leave-one-out bound \eqref{eq: leave-one-out} 
to complete the proof.
\end{proof}


\begin{thebibliography}{99}

\bibitem{Abbe} E. Abbe, A. Shpilka, A. Wigderson,
{\em Reed-Muller codes for random erasures and errors,} 
IEEE Transactions on Information Theory 61 (2015), 5229--5252.

\bibitem{A} R. Adamczak, 
{\em Logarithmic Sobolev inequalities and concentration of measure for convex functions and polynomial chaoses,} 
Bull. Pol. Acad. Sci. Math. 53 (2005), 221--238.

\bibitem{Adamczak-et-al} R. Adamczak, O. Gu\'edon, A. Litvak, A. Pajor, N. Tomczak-Jaegermann, 
{\em Smallest singular value of random matrices with independent columns,} 
C. R. Math. Acad. Sci. Paris 346 (2008), 853--856.

\bibitem{AL} R. Adamczak, R. Latala, 
{\em Tail and moment estimates for chaoses generated by symmetric random variables with logarithmically concave tails,} 
Ann. Inst. Henri Poincaré Probab. Stat. 48 (2012), 1103--1136.

\bibitem{ALM} R. Adamczak, R. Latala, R. Meller,
{\em Moments of Gaussian chaoses in Banach spaces,}
preprint (2020).

\bibitem{AS} R. Adamczak, M. Strzelecki, 
{\em Modified log-Sobolev inequalities for convex functions on the real line. Sufficient conditions,}
Studia Mathematica 230 (2015), 59--93.

\bibitem{AW} R. Adamczak, P. Wolff, 
{\em Concentration inequalities for non-Lipschitz functions with bounded derivatives of higher order,} Probability Theory and Related Fields 162 (2015), 531--586.

\bibitem{Anari} N. Anari, C. Daskalakis, W. Maass, C. Papadimitriou, A. Saberi, S. Vempala, 
{\em Smoothed analysis of discrete tensor decomposition and assemblies of neurons,} 
In Advances in Neural Information Processing Systems, pp. 10857--10867. 2018.

\bibitem{Anderson} J. Anderson, M. Belkin, N. Goyal, L. Rademacher, J. Voss, 
{\em The more, the merrier: the blessing of dimensionality for learning large gaussian mixtures,} 
JMLR: Workshop and Conference Proceedings 35 (2014), 1--30.

\bibitem{Basak-Rudelson}  A. Basak, M. Rudelson, 
{\em Invertibility of sparse non-Hermitian matrices,} 
Adv. Math. 310 (2017), 426--483.

\bibitem{Baskara} A. Bhaskara, M. Charikar, A. Moitra, A. Vijayaraghavan, 
{\em Smoothed analysis of tensor decompositions,} 
In Proceedings of the forty-sixth annual ACM symposium on Theory of computing, 
pp. 594--603. ACM, 2014.

\bibitem{BLM} S. Boucheron, G. Lugosi, P. Massart, 
Concentration inequalities: A nonasymptotic theory of independence. 
Oxford university press, 2013.

\bibitem{GNT} F. G\"otze, A. Naumov, A. Tikhomirov, 
{\em On minimal singular values of random matrices with correlated entries,} 
Random Matrices Theory Appl. 4 (2015), no. 2, 1550006.

\bibitem{GSS} F. G\"otze, H. Sambale, A. Sinulis,
{\em Concentration inequalities for polynomials in $\alpha$-sub-exponential random variables,}
preprint (2019).

\bibitem{GRST} N. Gozlan, C. Roberto, P. Samson, P. Tetali, 
{\em Kantorovich duality for general transport costs and applications,}
J. Funct. Anal. 273 (2017), 3327--3405.

\bibitem{KKS} J. Kahn, J. Koml\'os, E. Szemer\'edi, 
{\em On the probability that a random $\pm 1$-matrix is singular,}
J. Amer. Math. Soc. 8 (1995), 223--240.

\bibitem{Latala} R. Latala, 
{\em Estimates of moments and tails of Gaussian chaoses,}
The Annals of Probability 34 (2006), 2315--2331.

\bibitem{Ledoux} M. Ledoux, 
{\em The concentration of measure phenomenon.} No. 89. 
American Mathematical Soc., 2001.

\bibitem{Ledoux-Talagrand} M. Ledoux, M. Talagrand,
{\em Probability in Banach Spaces: isoperimetry and processes.} 
Springer, 2013.

\bibitem{Lehec} J. Lehec,
{\em Moments of the Gaussian chaos,}
Seminaire de Probabilites XLIII, 327--340, 
Lecture Notes in Math., 2006, Springer, Berlin, 2011. 

\bibitem{Maurer} A. Maurer, 
{\em Concentration inequalities for functions of independent variables,} 
Random Structures \& Algorithms 29 (2006), 121--138.

\bibitem{Rudelson Annals} M. Rudelson, 
{\em Invertibility of random matrices: norm of the inverse,}
Ann. of Math. (2) 168 (2008), 575--600.

\bibitem{Rudelson} M. Rudelson,  
{\em Row products of random matrices,} 
Advances in Mathematics 231 (2012), 3199--3231.

\bibitem{RV square} M. Rudelson, R. Vershynin, 
{\em The Littlewood-Offord Problem and invertibility of random matrices,} 
Advances in Mathematics 218 (2008), 600--633.

\bibitem{RV rectangular} M. Rudelson, R. Vershynin, 
{\em Smallest singular value of a random rectangular matrix,} 
Communications on Pure and Applied Mathematics 62 (2009), 1707--1739.

\bibitem{RV ICM} M. Rudelson, R. Vershynin,
{\em Non-asymptotic theory of random matrices: extreme singular values.}
Proceedings of the International Congress of Mathematicians. Volume III, 1576--1602,
Hindustan Book Agency, New Delhi, 2010.

\bibitem{RV} M. Rudelson, R. Vershynin, 
{\em Hanson-Wright inequality and sub-gaussian concentration,} 
Electronic Communications in Probability 18 (2013), 1--9.

\bibitem{Talagrand} M. Talagrand, 
{\em A new look at independence,}
The Annals of Probability 24 (1996), 1--34.

\bibitem{Tao book} T. Tao, 
{\em Topics in random matrix theory.} 
Vol. 132. American Mathematical Soc., 2012.

\bibitem{Tao-Vu RSA} T. Tao, V. Vu, 
{\em On random $\pm 1$ matrices: singularity and determinant,} 
Random Structures and Algorithms 28 (2006), 1--23.

\bibitem{Tao-Vu Annals} T. Tao, V. Vu, 
{\em Inverse Littlewood-Offord theorems and the condition number of random discrete matrices,} 
Annals of Math. 169 (2009), 595--632.

\bibitem{Tao-Vu GAFA} T. Tao, V. Vu,
{\em Random matrices: the distribution of the smallest singular values,} 
Geom. Funct. Anal. 20 (2010), 260--297. 

\bibitem{Tikhomirov Advances} K. Tikhomirov, 
{\em The limit of the smallest singular value of random matrices with i.i.d. entries,} 
Adv. Math. 284 (2015), 1--20. 

\bibitem{Tikhomirov Israel} K. Tikhomirov, 
{\em The smallest singular value of random rectangular matrices with no moment assumptions on entries,}
Israel J. Math. 212 (2016), 289--314.

\bibitem{V book} R. Vershynin, 
{\em High-dimensional probability. An introduction with applications in data science.}
Cambridge University Press, 2018.

\bibitem{Wainwright} M. Wainwright, 
{\em High-dimensional statistics. A non-asymptotic viewpoint.}
Cambridge University Press, 2018.

\end{thebibliography}
\end{document}